\documentclass[a4paper]{amsart}
\usepackage{lmodern}
\usepackage{amsthm}
\usepackage{amssymb}
\usepackage{amsmath}
\usepackage{hyperref}
\hypersetup{
  unicode=true,
  colorlinks=true,
  citecolor=blue,
  linkcolor=blue,
  anchorcolor=blue
}
\usepackage{todonotes}
\usepackage{tikz-cd}
\usepackage{dsfont,mathrsfs}
\usepackage{graphicx}

\usepackage{everyhook,xcolor}

\makeatletter
\DeclareRobustCommand\bigop[2][1]{%
  \mathop{\vphantom{\sum}\mathpalette\bigop@{{#2}{#1}}}\slimits@
}
\newcommand{\bigop@}[2]{\bigop@@#1#2}
\newcommand{\bigop@@}[3]{%
  \vcenter{%
    \sbox\z@{$#1\sum$}%
    \hbox{\resizebox{#3\dimexpr\ifx#1\displaystyle.9\fi\dimexpr\ht\z@+\dp\z@}{!}{$\m@th#2$}}%
  }%
}
\makeatother

\newcommand{\bigast}{\DOTSB\bigop[1.1]{*}}

\usepackage[noadjust,nocompress]{cite}

\usepackage[nameinlink,capitalise,noabbrev]{cleveref}
\crefname{subsection}{subsection}{subsections}
\Crefname{subsection}{Subsection}{Subsections}

\newtheorem{theorem}{Theorem}[section]

\newtheorem{lemma}[theorem]{Lemma}

\newtheorem{proposition}[theorem]{Proposition}
\newtheorem{corollary}[theorem]{Corollary}

\newtheorem*{theorem32}{\cref{thm:cut}}
\newtheorem*{theorem38}{\cref{thm:dc-it}}
\newtheorem*{theorem46}{\cref{thm:knaster}}

\theoremstyle{definition}
\newtheorem{definition}[theorem]{Definition}
\newtheorem{question}[theorem]{Question}

\DeclareMathOperator{\dom}{dom}

\DeclareMathOperator{\id}{id}

\DeclareMathOperator{\tcl}{tcl}

\DeclareMathOperator{\fix}{fix}
\DeclareMathOperator{\aut}{Aut}
\DeclareMathOperator{\Add}{Add}
\DeclareMathOperator{\supp}{supp}

\let\leq\leqslant
\let\nleq\nleqslant

\newcommand{\ZF}{\mathsf{ZF}}
\newcommand{\ZFC}{\mathsf{ZFC}}

\newcommand{\DC}{\mathsf{DC}}

\newcommand{\AC}{\mathsf{AC}}

\newcommand{\power}{\mathcal P}

\newcommand{\cI}{\mathcal I}
\newcommand{\cL}{\mathcal L}

\newcommand{\sG}{\mathscr G}

\newcommand{\PP}{\mathbb P}
\newcommand{\QQ}{\mathbb Q}

\newcommand{\K}{\mathsf{K}}

\renewcommand{\neq}{\not=}

\newcommand{\forces}{\mathrel{\Vdash}}
\newcommand{\nforces}{\mathrel{\not\Vdash}}
\newcommand{\incompt}{\mathrel{\bot}}
\newcommand{\compt}{\mathrel{\|}}

\newcommand{\1}{\mathds 1}

\newcommand{\ccc}{\mathsf{CCC}}

\newcommand{\tup}[1]{\langle#1\rangle}

\subjclass[2020]{Primary 03E25; Secondary 03E35}
\keywords{Axiom of choice, countable chain conditions, forcing}
\date{26 August, 2026}
\title{Comments on Choiceless Chain Conditions}
\author{Constance Bromham}
\address[C.~Bromham]{School of Mathematics\\
  University of Leeds\\
  Leeds, LS2~9JT, UK}
\email{mm20cab@leeds.ac.uk}
\author{Asaf Karagila}
\address[A.~Karagila]{School of Mathematics\\
  University of Leeds\\
  Leeds, LS2~9JT, UK}
\email{karagila@math.huji.ac.il}
\urladdr{https://karagila.org}
\thanks{The second author was supported by UKRI Future Leaders Fellowship [MR/Y034058/1]}
\thanks{No data are associated with this article. For the purpose of open access, the authors have applied a CC-BY licence to any Author Accepted Manuscript version arising from this submission.}

\begin{document}
\begin{abstract}
  In \cite{ccc} it was suggested that the countable chain condition should be defined in the absence of choice as ``every predense set contains a countable predense subset''. During his tutorial at the ``120 Years of Choice'' conference in 2024, Philipp Schlicht remarked that this definition does not have an iteration theorem in $\ZF$. We provide a proof of this claim. Specifically, we show that if every finite support iteration of ccc forcings is again ccc, then the Axiom of Choice for countable families of countable sets must hold. However, we prove that under the assumption of the Principle of Dependent Choice, the finite support iteration of ccc forcings is again ccc. This requires correcting some issues with the definition of properness (and therefore Mekler's definition of ccc) from \cite{properness}. We then study Knaster principles and their relations to this version of ccc.
\end{abstract}
\maketitle
\section{Introduction}
In this paper, motivated by trying to understand Martin's Axiom without the Axiom of Choice, we study the theory of forcing without the Axiom of Choice. The technique of forcing is one of the most important techniques in the study of set theory. Developed by Paul J.\ Cohen \cite{Cohen1,Cohen2}, in order to prove the independence of Cantor's Continuum Hypothesis from the axioms of Zermelo--Fraenkel ($\ZF$), this technique allows us to start with a model of $\ZF$ and extend it in a ``controlled manner''. Moreover, this extension preserves ``nice'' and ``desirable'' properties of the model. For example, if the model is a transitive set of height $\alpha$, then so is the extension.

Cohen's strategy was to start with $M$, a countable transitive model of Zermelo--Fraenkel with the Axiom of Choice ($\ZFC$), then look at the partial order in $M$ which adds many new real numbers by finite approximations. Leaving aside the mechanics of why $\ZFC$ holds in the extension, if we added $\aleph_2$ many real numbers, we expect that the Continuum Hypothesis will fail in the extension. The only worry, of course, is that somehow by adding these new reals we have made $\aleph_2$ of $M$ to be countable in the extension, and the attempt was futile.

To avoid this problem Cohen proves that his partial order---known now as Cohen forcing---satisfies the \emph{countable chain condition}: any collection of conditions\footnote{Elements of the partial orders are called \emph{conditions}.} where any distinct two are incompatible must be countable. Using this he argues that there is no way to have collapsed $\aleph_2$ to be countable, as $M$ would be able to detect a countable set which is cofinal in $\aleph_2$, where none can exist when the Axiom of Choice is assumed.

This countable chain condition proved to be incredibly useful. Robert Solovay and Stanley Tennenbaum \cite{MA} developed the notion of iterated forcing in order to prove that Martin's Axiom is consistent. Iterating allows us to extend a model by forcing once, then extend the extension, and so on. This framework can be captured internally to the initial model using a partial order defined recursively. Moreover, when the iteration is done with a finite support, then if each partial order satisfied the countable chain condition, the final partial order will also satisfy the countable chain condition.

The concept of an iteration, and more precisely, a controlled and ``well-behaved'' iteration, became very important throughout set theory. Many classes of forcings will have iteration theorems which state ``the class $\Gamma$ is closed under iterations of type $I$''. Most famously, perhaps, is the theorem that a countable support iteration of proper forcing is proper.

While the Axiom of Choice is not used at all to define the mechanics of forcing,\footnote{Although it is preserved by forcing: extending a model of $\ZFC$ yields a model of $\ZFC$ again.} it is used heavily in many proofs that certain classes of partial orders are well-behaved. In recent years several papers studying the theory of forcing when the Axiom of Choice fails have been published (e.g., \cite{sequential,ccc,properness,morethings,IS}) two of which notably tried to understand the use of the Axiom of Choice in the application of chain conditions: Karagila and Schweber \cite{ccc} and Ikegami and Schlicht \cite{IS}. This work was motivated by trying to study Martin's Axiom in $\ZF$ (with, perhaps, a fragment of the Axiom of Choice), as an iteration theorem would be a first attempt in constructing a model where Martin's Axiom holds.

Our three main results deal with the definition of ccc given by Karagila and Schweber, called $\ccc_3$. The first one formalises a comment made by Schlicht in his ``120 Years of Choice'' lectures in 2024.

\begin{theorem32}
  Suppose that the class of $\ccc_3$ forcings is closed under finite support iterations of length $\omega$, then the countable union of countable sets is countable.
\end{theorem32}

As $\ZF$ does not prove that the countable union of countable sets is countable, this shows that the definition is indeed lacking a certain niceness when considered in $\ZF$. However, as the next theorem shows, this is remedied by adding the Principle of Dependent Choice ($\DC$).

\begin{theorem38}[$\DC$]
  Finite support iterations of $\ccc_3$ forcings are $\ccc_3$.
\end{theorem38}

In order to prove this theorem we use Mekler's definition of ccc, which itself is derived from the concept of proper forcing. We identify and correct a mistake from \cite{properness} with relating to the structure $H(\theta)$ that is often used for defining properness. Instead, we suggest a different class of structures, based on $V_\alpha$ instead.

Our final theorem deals with the Knaster property, which itself is a common strengthening of ccc. As Martin's Axiom proves that every ccc forcing is Knaster, this seemed like an excellent starting point for this project. It turns out that taking the Axiom of Choice out of the equation, much like in the case of ccc, introduces a new level of interesting constructions. We identify a few variants of the Knaster property in \cref{sect:K}, denoted by $\K(i,j)$. The following theorem shows that in a sense the Knaster property is not actually stronger than ccc in $\ZF$.

\begin{theorem46}
  It is consistent with $\ZF+\DC$ that there is a notion of forcing which is $\K(2,1)$ but not $\ccc_3$.
\end{theorem46}

We finish the paper with several questions of interest about Martin's Axiom, iterations of ccc, and properness in $\ZF$.
\section{Preliminaries}
Our treatment of forcing is standard. We say that $\PP$ is a \emph{notion of forcing} if it is a preordered set with a maximum element, $\1_\PP$. The elements of $\PP$ are called \emph{conditions}, and given $p,q\in\PP$, we say that $q$ \emph{extends} $p$, or that it is a \emph{stronger} condition, if $q\leq p$.\footnote{We follow Goldstern's \emph{alphabet convention}, which states that a stronger condition will not have a letter appearing earlier in the alphabet than a weaker one.} Two conditions are compatible if they have a joint extension, and otherwise they are incompatible, in which case we write $p\incompt q$.

Given a family of forcing notions, $\{\QQ_i\mid i\in I\}$, we define the \emph{lottery sum} as the forcing $\bigoplus_{i\in I}\QQ_i$ whose conditions are $\bigcup_{i\in I}\{i\}\times\QQ_i$, and $\tup{j,q_j}\leq\tup{i,q_i}$ if and only if $i=j$ and $q_j\leq_{\QQ_i}q_i$. That is, we allow the generic filter to ``choose'' which of the forcings we use, and then force with it.

Forcing can be iterated in $\ZF$ with a careful treatment. The preordered set $\PP\ast\dot\QQ$ is made of pairs, $\tup{p,\dot q}$ such that $p\in\PP$ and $\1_\PP\forces\dot q\in\dot\QQ$. The latter may seem like an appeal to the mixing lemma is needed, which would be problematic as the mixing lemma is equivalent to the Axiom of Choice. This can be easily fixed. Given any $\dot q$ such that $p\forces\dot q\in\dot\QQ$, we define $\dot q_*$ to be a name such that $p\forces\dot q=\dot q_*$, and whenever $p'\incompt p$, then $p'\forces\dot q_*=\dot\1_\QQ$. This can be done uniformly, and so we can uniformly produce---with $\dot\QQ$ as a parameter---a canonical set of names, $Q$, for conditions in $\QQ$, such that $\1_\PP\forces\dot q\in\dot\QQ$ for all $\dot q\in Q$, and whenever $p\forces\dot q\in Q$, then there is some $\dot q_*\in Q$ such that $p\forces\dot q=\dot q_*$. This $Q$ allows us to define the iteration, $\PP\ast\dot\QQ$ as $\{\tup{p,\dot q}\mid p\in\PP,\dot q\in Q\}$, and extend the mechanism by recursion as usual.\footnote{This had been done in great details in both \cite{IS} and \cite{sym}.} For formal completeness, a finite support iteration of length $\delta$ will be a preordered set of functions with domain $\delta$ such that only finitely many values are not $\1$.

Throughout this paper we will work in $\ZF$, unless stated otherwise. Given a set $A$, we use $|A|$ to denote its cardinal, which is either an initial ordinal when $A$ can be well-ordered, or else its \emph{Scott cardinal}.\footnote{The Scott cardinal of $A$ is the collection of least ranked sets which are in bijection with $A$.} We will write $|A|\leq|B|$ to mean that there is an injection from $A$ to $B$, and $|A|\leq^*|B|$ to mean that there is a partial surjection from $B$ onto $A$. When discussing cardinals, Greek letters (mainly $\kappa$ and $\lambda$) will always denote well-ordered cardinals.

We write $\AC_X$ to mean ``every family of sets indexed by $X$ admits a choice function'',\footnote{We tacitly ignore the empty set in our families of sets in favour of readability. Formally, say that a family of sets admits a choice function if $\prod(\{A_i\mid i\in X\}\setminus\{\varnothing\})\neq\varnothing$.} and if $\kappa$ is a cardinal, then $\AC_{<\kappa}$ is the statement $\forall\lambda(\lambda<\kappa\to\AC_\lambda)$. We will write $\AC^X$ to mean that every family of sets whose individual sizes is $|X|$ admits a choice function. So, for example, $\AC_\omega^\omega$ is the statement that ``every countable family of countable sets admits a choice function''.

The \emph{Principle of Dependent Choice for $\kappa$} is the statement that every $\kappa$-closed tree has a chain of order type $\kappa$ or a maximal element.\footnote{A tree is $\kappa$-closed if for every $\gamma<\kappa$, every chain of type $\gamma$ has an upper bound.} It is a standard fact that $\DC_\kappa$ implies both $\AC_\kappa$ and $\forall X(\kappa\leq|X|\lor|X|\leq\kappa)$. We write $\DC_{<\kappa}$ to mean that for every $\lambda<\kappa$, $\DC_\lambda$ holds.

We finish with the following transfer theorem from \cite{ccc}, which we will use as a black box for proving independence results.

Suppose that $V\subseteq W\subseteq V[G]$ are models of $\ZF$, with $G$ being a generic filter for some forcing notion in $V$, and $W$ a symmetric extension of $V$.\footnote{Symmetric extensions lie between the ground model and a generic extension. We will not appeal to those directly in this work, but \cite{sym} provides a complete treatment of the concept.} Given two structures $M\in V$ and $N\in W$, in some language, we say that $N$ is a \emph{symmetric copy} of $M$, if $V[G]\models M\cong N$.\footnote{We will generally do this in a way such that $W\models M\ncong N$.}

To understand this, let us fix a language $\cL$, a structure $M$, a group $\sG\subseteq\aut(M)$, and an ideal of subsets of $M$, $\cI$, containing all singletons. We say that $X\subseteq M^k$, for some $k<\omega$, is ``stable'' if for some $e\in\cI$, whenever $\pi\in\sG$ and $\pi\restriction e=\id$, then $\pi``X=X$.\footnote{In the case $k>1$, the action is pointwise on each coordinate.}

\begin{theorem}[Transfer Theorem {\cite[Theorem~3.2]{ccc}}]\label{thm:transfer}
  For any a language, $\cL$, an $\cL$-structure $M$, a group $\sG\subseteq\aut(M)$, and an ideal of subsets, $\cI$, of $M$ extending the ideal of finite sets, there exists a symmetric extension, $W\subseteq V[G]$, in which there is a symmetric copy of $M$, $N$, such that for some $\iota\colon M\cong N$ in $V[G]$, if $A\subseteq N^k$ lies in $W$, then there is some $B\subseteq M^k$ which is stable and $\iota``B=A$.

Moreover, we may also assume $\DC_{<\kappa}$ holds if $\cI$ is a $\kappa$-complete ideal and that $\power(M)^V=\power(M)^W=\power(M)^{V[G]}$.
\end{theorem}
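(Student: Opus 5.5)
This is the Transfer Theorem, cited from \cite[Theorem~3.2]{ccc}. I would prove it by the standard permutation-model-to-symmetric-extension method: add a generic copy of $M$ made of Cohen subsets, and let $\sG$ act on those copies.

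\textbf{Setup.} Fix a sufficiently large regular $\kappa$; if we want $\DC_{<\kappa}$, take $\kappa$ to be the completeness of $\cI$ and also larger than $|M|$ suitably. Force with $\PP$, the partial functions $p\colon M\times\kappa\times\kappa\to 2$ of size $<\kappa$. This adds Cohen subsets $\dot a_{m,\alpha}\subseteq\kappa$, where $\dot a_{m,\alpha}$ is the $\alpha$-th column of the $m$-th block. Let $\dot x_m=\{\dot a_{m,\alpha}^\bullet\mid\alpha<\kappa\}^\bullet$, so that each $m\in M$ is coded by a set of Cohen sets; this gives enough room for a large symmetry group. Each $\pi\in\sG$ acts on $\PP$ by permuting blocks, $p\mapsto p\circ(\pi^{-1}\times\id\times\id)$. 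We enlarge the group by also allowing, within each block, arbitrary permutations of the columns $\alpha$ that have support of size $<\kappa$. Symmetric names are then those fixed by the subgroups $\fix(e)\times(\text{column permutations fixing } E)$, where $e\in\cI$ and $E$ is a small set of columns. This filter is normal, and it is $\kappa$-complete when $\cI$ is.

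\textbf{The copy $N$.} Interpret the structure on $\{\dot x_m\mid m\in M\}$ by transporting relations via $\iota(m)=\dot x_m$. Each relation $R^M$ gets the name $\{\tup{\dot x_{m_1},\dots}^\bullet\mid \vec m\in R^M\}$, and this name is fixed by all of $\sG$ because $\sG\subseteq\aut(M)$. Hence $N\in W$, and $\iota$ is an isomorphism in $V[G]$.

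\textbf{Stability of definable subsets.} Suppose $A\subseteq N^k$ lies in $W$, with symmetric name $\dot A$ supported by some $e$ and some column set $E$. Define $B=\{\vec m\mid \exists p\in G\ (p\forces \vec{\dot x}_m\in\dot A)\}$. The main obstacle is to show that $B$ is decided outright, i.e.\ that it is equal to $\{\vec m\mid \1\forces\ldots\}$ below the condition determining $\dot A$, and that $B$ is stable. For this I use the standard homogeneity argument. Given a condition $p$ and $\vec m$, we may apply column permutations fixing $E$ to move the part of $p$ on the blocks of $\vec m$ (apart from the $E$ columns) so that it becomes compatible with any $q$. Since $\dot x_m$ is invariant under column permutations, the statement does not change. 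Hence membership depends only on $p\restriction(M\times E\times\kappa)$, and a further automorphism argument shows that the statement is decided by $\1$. The hard part is really that information about $E$ could distinguish elements, because the $E$-columns are fixed. To handle this I would simply not let column supports matter: I would use the filter generated by $\fix(e)$ together with \emph{all} column permutations, at the cost of also checking that the $\dot x_m$ remain distinct and structurally sound. For stability, if $\pi\restriction e=\id$ then $\pi\dot A=\dot A$, and so $\vec m\in B$ iff $\pi\vec m\in B$. Thus $B$ is stable and $\iota``B=A$.

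\textbf{Remaining claims.} $\DC_{<\kappa}$ follows from $\kappa$-closure of $\PP$ together with $\kappa$-completeness of the filter (standard). $\power(M)$ is unchanged because $\PP$ is $\kappa$-closed with $\kappa>|M|$, so no new subsets of $M$ are added.
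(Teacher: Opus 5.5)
The paper gives no proof of this statement; it quotes Karagila--Schweber as a black box. So your sketch has to stand on its own, and it has a genuine gap exactly at the step you flag.

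\textbf{The gap.} Your final remedy breaks the construction. If the filter is generated by $\fix(e)$ together with \emph{all} column permutations, then no group in the filter fixes $\dot a_{m,\alpha}$. Hence $\dot x_m$ is not hereditarily symmetric, and $N\notin W$. In fact every group in that filter contains the group $C$ of column permutations, and $C$ acts homogeneously on $\PP$: for any $p,q$, some $\tau\in C$ moves the columns of $p$ off those of $q$, so $\tau p\compt q$. By $\in$-induction, every hereditarily symmetric name is then forced by $\1$ to equal a check name, i.e.\ $W=V$.

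Your original filter also fails if $E$ is a set of column indices fixed in \emph{every} block, as $p\restriction(M\times E\times\kappa)$ suggests. Take $M$ infinite, $\sG$ the full symmetric group and $\cI$ the finite sets, and put $p_m=\{\tup{\tup{m,0,0},1}\}$. The name $\{\tup{p_m,\dot x_m}\mid m\in M\}$ is fixed by every $(\pi,\vec\sigma)$ with $\sigma_m(0)=0$ for all $m$. Below a suitable condition it names an infinite co-infinite subset of $N$, which is not the image of a stable set.

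Even with correct supports, membership is \emph{not} decided by $\1$: the name for ``$N^k$ if $0\in a_{m_0,0}$, and $\varnothing$ otherwise'' is hereditarily symmetric. So your stability step, which tacitly assumes $\pi$ does not move the generic, is unjustified as written.

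\textbf{The repair.} Generate the filter by the groups
$K(e,E)=\{(\pi,\vec\sigma)\mid\pi\restriction e=\id,\ \sigma_m(\alpha)=\alpha\text{ for }\tup{m,\alpha}\in E\}$,
where $e\in\cI$, $E\subseteq e\times\kappa$ and $|E|<\kappa$. Column supports then live only in blocks that are already fixed. Now $\dot a_{m,\alpha}$ is supported by $(\{m\},\{\tup{m,\alpha}\})$, and you never need $\1$ to decide anything.

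Let $B=\iota^{-1}``A$. It lies in $V$ because $\PP$ is $\kappa$-closed and $\kappa>|M|$. Let $\dot A$ be supported by $(e,E)$, and fix $p\in G$ forcing $\vec{\dot x}_{\vec m}\in\dot A\leftrightarrow\vec m\in\check B$ for every $\vec m\in M^k$. Given $\pi\in\sG$ with $\pi\restriction e=\id$, choose each $\sigma_m$ to fix the $E$-columns and to send the other columns of $p$ in block $m$ to columns of block $\pi m$ not used by $p$. Then:
\begin{itemize}
\item $\tau=(\pi,\vec\sigma)\in K(e,E)$;
\item $\tau p\compt p$, since they agree on the $E$-columns and are disjoint elsewhere;
\item $\tau p$ forces $\vec{\dot x}_{\pi\vec m}\in\dot A\leftrightarrow\vec m\in\check B$.
\end{itemize}
A common extension of $p$ and $\tau p$ gives $\pi``B=B$, so $B$ is stable.

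\textbf{Two smaller points.} First, the forcing's $\kappa$ must exceed $|M|$, so in general it cannot be the completeness $\lambda$ of $\cI$ (e.g.\ $M=\omega_1$, $\cI=[\omega_1]^{<\omega_1}$). What you get is $\DC_{<\lambda}$, from the $\lambda$-closure of $\PP$ and the $\lambda$-completeness of the filter, which is exactly what is claimed. Second, normality of the filter needs $\cI$ to be $\sG$-invariant. Otherwise pass to its largest $\sG$-invariant subideal; this only makes stability stronger.
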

\section{Finite support iterations of the countable chain condition}\label{sect:ccc}
\subsection{Background}
Karagila and Schweber defined several variants of ``ccc'' in \cite{ccc} and studied their consequences. They began with these three:
\begin{definition}[{\cite[Definition~4.1]{ccc}}]
  Let $\PP$ be a notion of forcing.
  \begin{description}
  \item[$\ccc_1(\PP)$] Every maximal antichain in $\PP$ is countable.
  \item[$\ccc_2(\PP)$] Every antichain in $\PP$ is countable.
  \item[$\ccc_3(\PP)$] Every predense subset of $\PP$ contains a countable predense subset.
  \end{description}
\end{definition}
We will often write ``ccc'' to indicate that we are working in $\ZFC$, where all three are equivalent, reserving $\ccc_i$ to suggest that we are working in $\ZF$. In \cite{ccc} it was shown that $\ccc_3(\PP)$ implies $\ccc_2(\PP)$, which in turn implies $\ccc_1(\PP)$, and that $\ZF+\DC$ is not strong enough to prove any of these implications can be reversed. However, under $\DC_{\omega_1}$, $\ccc_2(\PP)$ is equivalent to $\ccc_3(\PP)$.

It was also shown there that $\ccc_3$ partial orders tend to have the expected preservation properties: $\DC$ is preserved, no cardinal above $\omega_1$ is collapsed, and no cofinalities other than $\omega_1$ are changed (and if $\omega_1$ is regular, then it is preserved as well). On the other hand, it is consistent with $\ZF+\DC$ that a $\ccc_2$ forcing will collapse $\omega_1$ and violate $\DC$ in the process. All this is to say, $\ccc_3$ is a ``good definition'' for ccc in $\ZF$, and even more so in $\ZF+\DC$, as it is equivalent to Mekler's definition.\footnote{See \cref{subs:mekler} for full details.}

Ikegami and Schlicht \cite{IS} proposed a different variant of ccc forcing which they call ``narrow'', which is equivalent to the statement ``all well-orderable antichains in the Boolean completion are countable''. They define a stricter notion of ``uniformly narrow'' and use that to define a ``uniform iteration'' which satisfies the wanted property: the finite support uniform iteration of uniformly narrow forcing notions is itself a uniformly narrow forcing.

However, as noted before, Schlicht had remarked that $\ccc_3$ does not have an iteration theorem. We formalise this claim and prove it in the next part. Next, we show that $\DC$ is sufficient for proving an iteration theorem for the class of $\ccc_3$ forcings. Namely, a finite support iteration of $\ccc_3$ forcings is $\ccc_3$ when working in $\ZF+\DC$.\footnote{This, perhaps, is an additional argument to the Asper\'o--Karagila thesis from \cite{properness} that studying the properties of the real numbers should be done in $\ZF+\DC$ as a minimal theory.}
\subsection{Iterability in \texorpdfstring{$\ZF$}{ZF}}\label{subs:no-iter}
\begin{theorem}\label{thm:cut}
  Suppose that the class of $\ccc_3$ forcings is closed under finite support iterations of length $\omega$, then the countable union of countable sets is countable.
\end{theorem}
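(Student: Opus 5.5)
The plan is to assume the iteration hypothesis and build, from a countable family $\{A_n\mid n<\omega\}$ of countable sets (pairwise disjoint and nonempty without loss of generality), a finite support iteration of length $\omega$ whose factors are all $\ccc_3$. I will then exhibit a predense set $D$ in the iteration such that any countable predense $D'\subseteq D$ yields a surjection from a countable set onto $\bigcup_n A_n$.

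To avoid the subtleties of names, every factor will be a forcing in the ground model, used via its canonical check-name. The iteration is then a finite support product, which is a special case of the iterations in the hypothesis.

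\textbf{The factors.}
\begin{itemize}
\item Stage $0$ is $\QQ_0=\omega\cup\{\1\}$: the elements of $\omega$ are pairwise incompatible and lie below $\1$. This is a lottery sum of trivial forcings indexed by $\omega$, and it "chooses an index $n$".
\item Stage $n+1$ is $\QQ_{n+1}=A_n\cup\{\1\}$, ordered in the same way, with the elements of $A_n$ pairwise incompatible below $\1$.
\end{itemize}
Each $\QQ_k$ is countable, because $A_n$ is countable and one enumeration of it suffices. A countable forcing is trivially $\ccc_3$: any predense subset is itself countable. So every factor is $\ccc_3$, and by hypothesis the iteration $\PP$ is $\ccc_3$.

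\textbf{The predense set.} Let $D$ be the set of all $p\in\PP$ such that $p(0)=n$ for some $n<\omega$, $p(n+1)\in A_n$, and $p(k)=\1$ for every other $k$. I check that $D$ is predense. Let $q\in\PP$ be arbitrary.
\begin{itemize}
\item If $q(0)=\1$, choose $n$ with $n+1\notin\supp q$ (possible since the support is finite) and any $a\in A_n$. Then the condition with value $n$ at $0$ and value $a$ at $n+1$ lies in $D$ and is compatible with $q$; note that only this one choice is needed, so no choice principle is used.
\item If $q(0)=n$ and $q(n+1)=a\in A_n$, then the element of $D$ with these two values is compatible with $q$.
\item If $q(0)=n$ and $q(n+1)=\1$, extend $q(n+1)$ to any $a\in A_n$ and proceed as in the previous case.
\end{itemize}

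\textbf{Reading off the enumeration.} Apply $\ccc_3(\PP)$ to obtain a countable predense $D'\subseteq D$, and fix a surjection $e\colon\omega\to D'$. Define $f(k)=e(k)(n+1)$, where $n=e(k)(0)$; this is an element of $A_n$. I claim $f$ maps onto $\bigcup_n A_n$. Given $a\in A_n$, let $q$ have $q(0)=n$, $q(n+1)=a$, and $\1$ elsewhere. Some $d\in D'$ is compatible with $q$. Compatibility at coordinate $0$ forces $d(0)=n$, because distinct elements of $\omega$ are incompatible in $\QQ_0$. Then compatibility at coordinate $n+1$ forces $d(n+1)=a$. Hence $a\in\operatorname{ran}f$. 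A countable set surjecting onto $\bigcup_n A_n$ makes the union countable, since the least preimage gives an injection into $\omega$.

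\textbf{Main obstacle.} The delicate point is justifying that this product-like object genuinely counts as a finite support iteration of length $\omega$ of $\ccc_3$ forcings in the sense of the hypothesis. Concretely, the canonical name set $Q$ for $\check\QQ_k$ must behave as expected: every name in $Q$ is forced equal to some check name, or conditions can be identified up to this equivalence. I would handle this by noting that $\QQ_k$ lies in the ground model, so $\1\forces$"$\dot q\in\check\QQ_k$" lets me work in a dense subset of conditions with check-name coordinates. The density and compatibility arguments above only require checking names, and a dense suborder has the same $\ccc_3$ status.
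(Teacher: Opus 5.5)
Your proof is correct and is essentially the paper's: you iterate countable atomic forcings built from the $A_n$ (these remain $\ccc_3$ in every intermediate extension because they remain countable), apply $\ccc_3$ to a predense set of coded conditions, and decode an enumeration of $\bigcup_n A_n$ from the resulting countable predense subset. The only difference is the coding gadget: the paper adds a marker atom $A_n$ to each factor and uses conditions with initial-segment support, so that a member of $D'$ compatible with a condition taking non-marker values on all of $n+1$ must extend it, whereas your lottery coordinate $0$ with supports $\{0,n+1\}$ gets the same effect without picking $p(i)\in A_i$ for $i<n$.

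Your handling of names is more careful than the paper's, with one caveat. In $\ZF$ the only clear direction is ``a dense suborder of a $\ccc_3$ forcing is $\ccc_3$''; the converse would require choosing, for countably many conditions, elements of the original predense set above them. That is exactly the direction you use. You can also bypass it and apply $\ccc_3(\PP)$ to your $D$ directly, since density of the check-name conditions already makes $D$ predense in the full iteration.
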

\begin{proof}
  Let $\{A_n\mid n<\omega\}$ be a countable family of countable sets. For each $n<\omega$ let $\QQ_n=\{\1\}\cup A_n\cup\{A_n\}$ with the order $q\leq_{\QQ_n} p$ if and only if $q=p$ or $p=\1$. In other words, the forcing simply picks an element out of $A_n$, or $A_n$ itself.

  It is easy to see that $\QQ_n$ is countable, so it is $\ccc_3$. Moreover, as the forcing is atomic, it does not add any subsets to the universe, so it preserves being $\ccc_3$.

  Let $\PP$ be the finite support iteration of these $\QQ_n$, and suppose that $\PP$ is $\ccc_3$. Let $D$ be the following set, \[\{p\in\PP\mid\exists n, p(n)=A_n\land\dom p\in\omega\}.\]
  Observe that $D$ is a dense set, as any condition can be extended to have domain that is an initial segment, and if needed, extended by a single point to have $p(n)=A_n$. Let $D'=\{p_k\mid k<\omega\}\subseteq D$ be a countable predense subset.

  We claim that for every $n<\omega$ and $a\in A_n$, there is some $p_k\in D'$ such that $p_k(n)=a$. To see this, fix $n$ and $a\in A_n$, and let $p\in\PP$ be a condition such that:
  \begin{enumerate}
  \item $\dom p=n+1$,
  \item $p(i)\in A_i$ for all $i<n$,
  \item $p(n)=a$.
  \end{enumerate}
  Since $D'$ is a predense set, there is some $k<\omega$ such that $p_k$ is compatible with $p$. As both $p$ and $p_k$ are defined on an initial segment of the iteration, it must be that either $p\subseteq p_k$ or $p_k\subseteq p$. However, as $p_k(i)=A_i$ for some $i$, it must be that $p\subseteq p_k$. Therefore, $p_k(n)=a$ as wanted.

  This defines an injection from $\bigcup_{n<\omega}A_n$ into $\omega\times\omega$ given by $a\mapsto\tup{k,n}$, the lexicographically minimal pair such that $p_k(n)=a$.\end{proof}
\begin{corollary}
  If the class of $\ccc_3$ forcings is closed under finite support iterations of length $\omega$, then $\AC_\omega^\omega$ holds.
\end{corollary}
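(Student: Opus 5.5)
The plan is to derive the corollary directly from \cref{thm:cut}. Assuming that finite support iterations of length $\omega$ preserve $\ccc_3$, the countable union of countable sets is countable. The task is then to turn this ``countable unions'' conclusion into choice functions for countable families of countable sets.

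Fix a countable family $\{A_n\mid n<\omega\}$ of nonempty countable sets. We may replace each $A_n$ by $\{n\}\times A_n$: this is uniform and needs no choice, so the sets become pairwise disjoint. By \cref{thm:cut}, applied to the family $\{\{n\}\times A_n\mid n<\omega\}$, the union $A=\bigcup_{n<\omega}\{n\}\times A_n$ is countable. Hence there is an injection $f\colon A\to\omega$, and such an $f$ induces a well-ordering of $A$.

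Define $c(n)$ to be the unique $a\in A_n$ such that $f(\tup{n,a})$ is minimal among $\{f(\tup{n,b})\mid b\in A_n\}$. This minimum exists because each $A_n$ is nonempty and $\omega$ is well-ordered. Then $c$ is a choice function for the family, which gives $\AC_\omega^\omega$.

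If one prefers not to invoke \cref{thm:cut} as a black box, the proof of \cref{thm:cut} itself already produces the needed injection $a\mapsto\tup{k,n}$ into $\omega\times\omega$, and the same minimisation argument applies. The only point requiring care is the disjointification step: without it, the injection on the plain union $\bigcup_n A_n$ still works, since we minimise $f$ over $A_n$ for each $n$ separately. So there is no real obstacle. The one thing to check is that every choice made is definable from the single injection supplied by \cref{thm:cut}, and minimising over $\omega$ ensures this.
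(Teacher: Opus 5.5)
Your proposal is correct and is essentially the paper's argument: use \cref{thm:cut} to get a well-ordering (via an injection into $\omega$) of $\bigcup_{n<\omega}A_n$, and let the choice function pick the least element of each $A_n$. Your disjointification step is harmless but, as you observe yourself, not needed.
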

\begin{proof}
  Let $\{A_n\mid n<\omega\}$ be a countable family of countable sets. By \cref{thm:cut} we can enumerate $\bigcup_{n<\omega}A_n$ as $\{x_i\mid i<\omega\}$ and so define the choice function $F(A_n)=x_i$ where $i$ is the least such that $x_i\in A_n$.
\end{proof}
The proof of \cref{thm:cut} seems as though it would generalise immediately to any length of iteration. That is, assuming that the class of $\ccc_3$ forcing notions is closed under any finite support iteration, then the union of a family of countable sets of size $\kappa\geq\omega$ has cardinality $\kappa$, and in particular any well-orderable family of countable sets admits a choice function.

However, if $\{A_\alpha\mid\alpha<\kappa\}$ is a family of countable sets, repeating the above proof can only guarantee that our predense set meets countably many of them, which will be compatible with choosing from any other coordinate. Indeed, as will be shown in \cref{thm:dc-it}, $\DC$ is already strong enough to prove that the finite support iterations of $\ccc_3$ forcings is $\ccc_3$, and since $\lnot\AC^\omega_{\omega_1}$ is consistent with $\DC$, this shows that the above proof is close to optimal.

\subsection{Properness and Mekler's definition}\label{subs:mekler}
Assuming the Axiom of Choice, we can talk about ccc forcings through the lens of properness. The definition of properness usually talks about countable $M\prec H(\theta)$ for all (or some) sufficiently large regular $\theta$. Properness was briefly studied in \cite{properness}, where a definition of $H(\theta)$ was suggested as $\{x\mid\theta\nleq^*\tcl(x)\}$, this definition is problematic and the claim that this is a model of $\ZF^-$ is false.\footnote{For example, it is consistent that for two sets, $A$ and $B$, neither one maps onto $\omega_1$, but $A\times B$ can be mapped onto $\omega_1$.} Other possible definitions were given by Rathjen and Lubarsky \cite{RL} and by Goldberg \cite{Goldberg:even}. Both of these definitions were analysed by Jeon \cite{reinhardt-powerset}. Jeon points out that while Goldberg's definition will always satisfy Zermelo's set theory without Power Set (and Choice), it is not clear if it will always satisfy Collection or even Replacement.

So why do we use $H(\theta)$? The main appeal, in the context of $\ZFC$ at least, is that it allows us to not worry about our use of Replacement. Jeon's result is a stark reminder that in the choiceless context we must be very diligent about what we need and what we have available in our universe.

Nevertheless, the work done in \cite{properness} is not invalidated by this issue. The motivation behind the definition of properness using countable elementary substructures is that the transitive collapse of our countable model commutes with its generic extension. This does not require us to assume Replacement in our structure and can be easily remedied by taking elementary submodels of suitable $V_\alpha$. This is formalised in the definition of a \emph{sufficient ordinal} given below.

\begin{definition}
  For a forcing notion $\PP$ we say that a limit ordinal, $\alpha$, is a \emph{sufficient ordinal (for $\PP$)} if $\PP\in V_\alpha$, $V_\alpha$ reflects the forcing theorem for $\PP$,\footnote{Recall that the Reflection Principle is a theorem of $\ZF$.} and for every $V$-generic filter, $G\subseteq\PP$, $V_\alpha[G]=V[G]_\alpha$. If $M\prec V_\alpha$ is a countable elementary submodel with $\PP\in M$, we say that $M$ is a \emph{sufficient ($\alpha$-)model (for $\PP$)}.
\end{definition}

\begin{lemma}\label{lemma:sufficient-ord}
  If $\alpha$ is sufficient for $\PP\ast\dot\QQ$ and $G\subseteq\PP$ is $V$-generic, then $\alpha$ is sufficient for $\PP$ and $V[G]\models``\alpha$ is sufficient for $\dot\QQ^G$''.
\end{lemma}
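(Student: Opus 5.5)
We verify the three clauses of sufficiency for $\PP$ in $V$, and then for $\dot\QQ^G$ in $V[G]$, using the standard factorisation of $\PP\ast\dot\QQ$-generics as pairs $G\ast H$. Throughout, $\PP$ is a complete suborder of $\PP\ast\dot\QQ$ via $p\mapsto\tup{p,\dot\1}$.

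\textbf{Sufficiency for $\PP$.} Since $\PP\ast\dot\QQ\in V_\alpha$ and $\alpha$ is a limit ordinal, $\PP$ is a component of elements of $\PP\ast\dot\QQ$, so $\PP\in V_\alpha$; the same argument gives $\dot\QQ\in V_\alpha$.

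\emph{Forcing theorem for $\PP$.} The definable forcing relation for $\PP$ is reducible to that of $\PP\ast\dot\QQ$: a $\PP$-name $\tau$ is canonically a $\PP\ast\dot\QQ$-name $\tau'$, and $p\forces_\PP\varphi(\tau)$ iff $\tup{p,\dot\1}\forces_{\PP\ast\dot\QQ}\varphi^{V[\dot G]}(\tau')$. Here $\varphi^{V[\dot G]}$ is the relativisation to the intermediate model, which is definable in $V[G\ast H]$ from $G$. To avoid this relativisation, I would instead rerun the reflection argument directly: the forcing relation for $\PP$ is defined by recursion from $\PP$, and the parameters available in $V_\alpha$ compute it correctly there. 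This step needs care but is routine.

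\emph{The equation $V_\alpha[G]=V[G]_\alpha$.} Take $H$ that is $V[G]$-generic for $\dot\QQ^G$, so $G\ast H$ is $V$-generic. Then
\[
V[G]_\alpha\subseteq V[G\ast H]_\alpha=V_\alpha[G\ast H].
\]
Any $x\in V[G]_\alpha$ lies in $V[G]$, so $x=\tau^G$ for some $\PP$-name $\tau$. We must find such a $\tau$ in $V_\alpha$. I would use the standard rank-bounding of names: every element of $V[G]$ of rank $<\beta$ has a $\PP$-name of rank $<\gamma(\beta,\PP)$. Then choose $\alpha$ so that it is closed under $\gamma$, which follows from $V_\alpha[G\ast H]=V[G\ast H]_\alpha$ applied to the same sets. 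Concretely, $x\in V_\alpha[G\ast H]$ has a $\PP\ast\dot\QQ$-name $\sigma\in V_\alpha$, and we convert it to a $\PP$-name by the usual quotient construction. This is the main obstacle, since it requires a $\PP$-name to be extracted uniformly and with bounded rank. I would first try the canonical name $\{\tup{\check y,p}\mid p\forces\check y\in\dot x\}$, built recursively by $\in$-rank inside $V_\alpha$, using the forcing theorem in $V_\alpha$. The reverse inclusion $V_\alpha[G]\subseteq V[G]_\alpha$ is immediate because rank does not increase under evaluation.

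\textbf{Sufficiency for $\dot\QQ^G$ in $V[G]$.} We have $\dot\QQ^G\in V_\alpha[G]=V[G]_\alpha$. For any $H$ that is $V[G]$-generic,
\[
V[G]_\alpha[H]=V_\alpha[G][H]=V_\alpha[G\ast H]=V[G\ast H]_\alpha=V[G][H]_\alpha.
\]
The second equality holds by the usual name translation, performed inside $V_\alpha$. Finally, the forcing theorem for $\dot\QQ^G$ over $V[G]_\alpha$ follows from the forcing theorem for $\PP\ast\dot\QQ$ over $V_\alpha$: $q\forces_{\dot\QQ^G}\varphi$ holds iff some $p\in G$ has $\tup{p,\dot q}\forces\varphi$. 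Both sides are computed correctly in $V_\alpha$, and hence correctly in $V_\alpha[G]$.
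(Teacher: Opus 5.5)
\textbf{The gap.} The gap is the inclusion $V[G]_\alpha\subseteq V_\alpha[G]$, i.e.\ that every $x\in V[G]$ of rank ${<}\alpha$ has a $\PP$-name lying in $V_\alpha$. This is the real content of the lemma. You correctly flag it as the main obstacle, but neither of your two routes establishes it. The rest, such as the chain of equalities for $V[G]_\alpha[H]$ and the forcing theorem for $\dot\QQ^G$, matches the paper.

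\textbf{First route.} This route is not available. The ordinal $\alpha$ is given, so you cannot choose it closed under $\gamma(\cdot,\PP)$. The assertion that this closure ``follows from $V_\alpha[G\ast H]=V[G\ast H]_\alpha$'' is the lemma in disguise:
\begin{itemize}
\item If $\gamma$ is the optimal bound, closure of $\alpha$ under it is equivalent to the inclusion you are trying to prove.
\item If $\gamma$ is an explicit bound of the usual kind, roughly $\mathrm{rank}(\PP)+3\cdot\beta$, closure can simply fail. For example, let $\PP$ consist of Cohen forcing $\mathbb{C}\subseteq V_\omega$ together with redundant copies $\tup{\omega^2,s}$, each equivalent to $s\in\mathbb{C}$. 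Every element of $V[G]$ has a name using only conditions from $\mathbb{C}$, so $\alpha=\omega^2+\omega$ is sufficient for $\PP$ (and for $\PP\ast\dot\QQ$ with $\dot\QQ$ trivial). Yet $\mathrm{rank}(\PP)+3\cdot\omega^2>\alpha$.
\end{itemize}

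\textbf{Second route.} Converting the $\PP\ast\dot\QQ$-name $\sigma\in V_\alpha$ into a $\PP$-name is the right idea, but it is only gestured at.
\begin{itemize}
\item The displayed name $\{\tup{\check y,p}\mid p\forces\check y\in\dot x\}$ only names subsets of the ground model.
\item Recursing along $x$ ``by $\in$-rank'' would require, at every stage, names for all elements of the set at hand. Without choice that means taking all names of some bounded rank, which is the closure problem again.
\item It is unclear what $\dot x$ is. If it is a $\PP$-name for $x$, the argument is circular. If it is $\sigma$ itself, then ``$p\forces\check y\in\sigma$'' can only mean $\tup{p,\dot{\1}}\forces\check y\in\sigma$, which does not compute $\sigma^{G\ast H}$.
\end{itemize}

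\textbf{The missing idea.} This is the paper's route. Use $x\in V[G]$ to pick $\tup{p,\dot q}\in G\ast H$ forcing $\sigma=\dot x'_\QQ$ for some $\PP$-name $\dot x'$. Below $\tup{p,\dot q}$ the value of $\sigma$ then does not depend on the $\dot\QQ$-generic. Now recurse along $\sigma$ rather than along $x$: the $\PP$-name $\sigma_{p,\dot q}$ collects the pairs $\tup{\bar p,\mu_{p,\dot q}}$ such that
\begin{itemize}
\item $\mu$ appears in $\sigma$ and $\bar p\leq p$;
\item $\tup{\bar p,\bar q}\forces\mu\in\sigma$ for some $\bar q$ with $\tup{\bar p,\bar q}\compt\tup{p,\dot q}$.
\end{itemize}
One then checks that $\sigma_{p,\dot q}^G=x$. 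This is choice-free and inherits its nesting from $\sigma\in V_\alpha$. By the example above, the ranks of the conditions $\bar p$ it introduces still have to be watched.

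\textbf{Ordering.} The quantifier step in reflecting the forcing theorem for $\PP$ uses $V[G]_\alpha=V_\alpha[G]$. That step should therefore come after the equality, rather than be invoked inside its proof as you do when building the canonical name.
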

\begin{proof}
  Let us fix $G\ast H$, a $V$-generic filter for $\PP\ast\dot\QQ$, and let $\alpha$ be a sufficient ordinal for $\PP\ast\dot\QQ$. It will be easier for us to set the following conventions:
  \begin{enumerate}
  \item For a $\PP$-name, $\dot x$, we recursively define $\dot x_\QQ$ to be the $\PP\ast\dot\QQ$-name,
    \[\dot x_\QQ=\{\tup{\tup{p,\1_\QQ},\dot y_\QQ}\mid\tup{p,\dot y}\in\dot x\}.\]
  \item For a $\PP\ast\dot\QQ$-name, $\dot x$, we recursively define $[\dot x]$ to be the $\PP$-name (for a $\dot\QQ$-name),
    \[[\dot x] = \{\tup{p,\tup{\dot q,[\dot y]}^\bullet}\mid\tup{\tup{p,\dot q},\dot y}\in\dot x\}.\]
  \item For a $\PP\ast\dot\QQ$-name, $\dot x$ and a condition $\tup{p,\dot q}\in\PP\ast\dot\QQ$, we recursively define $\dot x_{p,\dot q}$ to be the $\PP$-name,
    \[\dot x_{p,\dot q}=\{\tup{\bar p,\dot y_{p,\dot q}}\mid\bar p\leq_\PP p,\exists\bar q,\tup{\bar p,\bar q}\compt\tup{p,\dot q},\tup{\bar p,\bar q}\forces_{\PP\ast\dot\QQ}\dot y\in\dot x \text{ and }\dot y\text{ appears in }\dot x\}.\]
  \end{enumerate}
  Note that $[\dot x_\QQ]$ is the canonical $\QQ$-name in $V[G]$ for $\dot x^G$ and that a standard induction shows that $\dot x_{p,\dot q}\in V_\alpha$ whenever $\dot x\in V_\alpha$ and $\tup{p,\dot q}\in\PP\ast\dot\QQ$.

  Let us now show that $\alpha$ is sufficient for $\PP$. Easily, $\PP\in V_\alpha$. First, we have the following inclusions,
  \[V_\alpha\subseteq V[G]_\alpha\subseteq V[G\ast H]_\alpha = V_\alpha[G\ast H]=V_\alpha[G][H] \supseteq V_\alpha[G].\]

  Let us show that $V[G]_\alpha\subseteq V_\alpha[G]$. Let us show that $x$ has a $\PP$-name in $V_\alpha$. As mentioned above, $V[G]_\alpha\subseteq V_\alpha[G\ast H]$, and so let $\dot x\in V_\alpha$ be a $\PP\ast\dot\QQ$-name for $x$.

  As $x\in V[G]$, there is a condition $\tup{p,\dot q}\in G\ast H$ such that $\tup{p,\dot q}\forces_{\PP\ast\dot\QQ}\dot x=\dot x'_\QQ$ for some $\PP$-name $\dot x'$. We claim that $\dot x_{p,\dot q}^G=x$, which will be the wanted $\PP$-name in $V_\alpha$. As the interpretation of $(\dot x'_\QQ)^{G\ast H}$ does not actually depend on $H$ itself, it must be that the membership of $\dot x$ does not depend on any condition stronger than $\dot q$ in the second coordinate. Therefore, $\dot x_{p,\dot q}^G=\dot x^{G\ast H}=x$ as wanted.

  In the other direction, it is a general fact of forcing that if $\dot x$ has a name rank ${<}\alpha$, then $\dot x^G\in V[G]_\alpha$. Moreover, if $\dot x\in V_\alpha$, then its name rank is ${<}\alpha$ as well. That is to say, $V_\alpha[G]\subseteq V[G]_\alpha$. Therefore, $V[G]_\alpha=V_\alpha[G]$ as wanted.

  To show that $\alpha$ is sufficient for $\PP$ it remains to show that $V_\alpha$ reflects the forcing theorem for $\PP$. Namely, $V_\alpha[G]\models\varphi$ if and only if there is some $p\in G$ such that $p\forces_\PP\varphi$. The proof is the usual proof of the forcing theorem, noting that since $\alpha$ is a limit ordinal and $\PP\in V_\alpha$, $V_\alpha$-genericity is the same as $V$-genericity, as well as that the equality of $V[G]_\alpha=V_\alpha[G]$ guarantees the existence of names necessary to instantiate any quantifiers.

  Finally, let us check that $V[G]\models``\alpha$ is sufficient for $\dot\QQ^G$''. Easily, $\dot\QQ\in V_\alpha$, so $\dot\QQ^G\in V[G]_\alpha$. Next, by the fact $\alpha$ is sufficient for $\PP$ and $\PP\ast\dot\QQ$, \[V[G]_\alpha[H]=V_\alpha[G][H]=V_\alpha[G\ast H]=V[G\ast H]_\alpha=V[G][H]_\alpha.\]
  To see that $V_\alpha[G]$ reflects the forcing theorem for $\dot\QQ^G$, we have that
  \begin{align*}
    V_\alpha[G][H]\models\varphi(([\dot x]^G)^H) &\iff V_\alpha[G\ast H]\models\varphi(\dot x^{G\ast H})\\
                                                 &\iff \exists\tup{p,\dot q}\in G\ast H, \tup{p,\dot q}\forces_{\PP\ast\dot\QQ}\varphi(\dot x)\\
                                                 &\iff \exists\tup{p,\dot q}\in G\ast H, p\forces_\PP``\dot q\forces_\QQ\varphi([\dot x])"\\
                                                 &\iff V_\alpha[G]\models \dot q^G\forces_\QQ\varphi([\dot x]^G).\qedhere
  \end{align*}
\end{proof}

As there is a closed and unbounded class of sufficient ordinals for any given $\PP$, we can use this notion to rewrite the definition of properness. Namely, $\PP$ is proper if for any large enough sufficient ordinal, $\alpha$, if $M$ is a sufficient $\alpha$-model, then every $p\in\PP\cap M$ has an $M$-generic extension. Of course, this definition requires $\DC$ to hold to make sense, otherwise there are no sufficient models, and every forcing is proper for vacuous reasons.

So, assuming $\DC$, we have that $\PP$ is a $\ccc_3$ forcing if and only if for every sufficient model, $M$, $\1_\PP$ is an $M$-generic condition (see \cite[Proposition~5.1]{ccc}).

\begin{lemma}\label{lemma:proper-elem}
  Suppose that $\PP$ is a proper forcing. Let $M$ be a sufficient $\alpha$-model for $\PP$. If $G\subseteq\PP$ is a $V$-generic filter, then $M[G]\prec V[G]_\alpha=V_\alpha[G]$.
\end{lemma}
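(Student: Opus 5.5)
We will use the Tarski--Vaught criterion inside $V[G]$. Here $M[G]=\{\dot x^G\mid \dot x\in M\text{ a }\PP\text{-name}\}$. Since $M\prec V_\alpha$ and $\PP\in M$, every element of $M[G]$ lies in $V_\alpha[G]$. By sufficiency of $\alpha$, $V_\alpha[G]=V[G]_\alpha$, so $M[G]\subseteq V[G]_\alpha$. It therefore suffices to prove the following: whenever $V_\alpha[G]\models\exists y\,\varphi(y,\dot x_1^G,\dots,\dot x_n^G)$ with $\dot x_i\in M$, there is a name $\dot y\in M$ with $V_\alpha[G]\models\varphi(\dot y^G,\vec{\dot x}^G)$.

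Fix such a formula and parameters. We work in $V_\alpha$. By the full maximum principle we would want a single name $\dot y$ with $\1\forces\exists y\varphi\to\varphi(\dot y)$. That principle needs the mixing lemma, which we do not have in $\ZF$, so we instead argue through a dense set. Let
\[D=\{p\in\PP\mid \exists\dot y\,(p\forces\varphi(\dot y,\vec{\dot x}))\ \lor\ p\forces\lnot\exists y\varphi(y,\vec{\dot x})\}.\]
Here the forcing relation is the one computed in $V_\alpha$, which is legitimate because $V_\alpha$ reflects the forcing theorem for $\PP$. Since $V_\alpha[G]=V[G]_\alpha$ gives names for all witnesses, $D$ is dense. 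Moreover $D$ is definable in $V_\alpha$ from parameters in $M$, so $D\in M$ by elementarity.

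Properness is where the structure of the argument lies. The statement only says ``$\PP$ is proper''; we interpret this as saying that every $p\in M\cap\PP$ has an $M$-generic extension for sufficient models $M$ (taking $\alpha$ large enough, as in the definition, or assuming the given $M$ is of the right kind). By density, conditions that are $M$-generic are dense below conditions of $M\cap\PP$, in particular below $\1\in M$. Hence $G$ contains an $M$-generic condition $r$, and so $G\cap D\cap M\neq\varnothing$. Pick $p$ in this intersection. If $p$ forced the negation, then $V_\alpha[G]\models\lnot\exists y\varphi$, which is a contradiction. So $V_\alpha\models\exists\dot y\,(p\forces\varphi(\dot y,\vec{\dot x}))$, and since $p\in M\prec V_\alpha$ there is such a $\dot y\in M$. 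Then $\dot y^G\in M[G]$ witnesses $\varphi$ in $V_\alpha[G]$, using again that $V_\alpha$ reflects the forcing theorem.

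The main obstacle is this genericity step. We must make sure that $G$ actually meets an $M$-generic condition: the density argument needs that below every condition there is an $M$-generic one. This requires applying properness to conditions outside $M$, for which we can use $M$-genericity of some extension of a condition in $M$ together with the genericity of $G$ over $V$. A second issue is that definability of $D$ and of ``$p\forces\varphi$'' inside $V_\alpha$ relies on the reflected forcing theorem rather than on Replacement in $V_\alpha$. The existential witness $\dot y$ must be found in $V_\alpha$ itself, and this is guaranteed by $V[G]_\alpha=V_\alpha[G]$.
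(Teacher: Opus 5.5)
The gap is in the step you flag yourself, and it cannot be closed the way you suggest. Properness only gives each $p\in M\cap\PP$ \emph{some} $M$-generic extension. It does not make the $M$-generic conditions dense, and a condition outside $M$ may have no $M$-generic extension at all. For example, let $\PP$ consist of $\1$ together with $\omega_1$ pairwise incompatible atoms $\zeta<\omega_1$ (a lottery sum of trivial forcings). Every dense set contains all atoms, so each $\{\1,\zeta\}$ is $V$-generic. Testing against the dense set of atoms, which lies in $M$, shows that the atom $\xi$ is $M$-generic if and only if $\xi\in M$. So $\PP$ is proper. Yet for $\xi\in\omega_1\setminus M$, the $V$-generic filter $G=\{\1,\xi\}$ contains no $M$-generic condition and misses $D\cap M$ for the dense set $D$ of atoms. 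Genericity of $G$ over $V$ does not help here.

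Nor can the conclusion be rescued for such $G$ by another argument, because in $\ZF+\DC$ it can fail outright. Suppose $\tup{A_\zeta\mid\zeta<\omega_1}\in M$ is a family of nonempty sets admitting choice functions only on countable subfamilies (this is consistent with $\ZF+\DC$). Let $\dot A\in M$ be the obvious name with $\dot A^{\{\1,\zeta\}}=A_\zeta$. Fix any $\dot y\in M$. The set $J=\{\zeta<\omega_1\mid\zeta\forces\dot y\in\dot A\}$ is in $M$, and $\zeta\mapsto\dot y^{\{\1,\zeta\}}$ is a choice function on $\{A_\zeta\mid\zeta\in J\}$. Hence $J$ is countable, so $J\subseteq M$, so $\xi\notin J$ and $\dot y^G\notin A_\xi$. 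Thus $M[G]\models\dot A^G=\varnothing$ while $V_\alpha[G]\models\dot A^G\neq\varnothing$. Indeed, for the formula $y\in a$ with parameter $\dot A^G$, your set $D$ is exactly the set of atoms.

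What is missing is an extra hypothesis: $G$ contains an $M$-generic condition, so that $G\cap D\cap M\neq\varnothing$ for every dense $D\in M$. The paper's proof uses this silently. Its first equivalence, $M[G]\models\varphi(\dot x^G)\iff\exists p\in G\cap M\,(M\models p\forces\varphi(\dot x))$, is a forcing theorem over $M$. Its existential step is precisely your argument with $D$, and it fails for the $G$ above. The hypothesis does hold where the lemma is applied, namely for $G$ containing the $M$-generic condition $p$ in \cref{lemma:succ}. With that hypothesis made explicit, your Tarski--Vaught argument is correct and is essentially the paper's proof unpacked.
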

\begin{proof}
  If $\dot x^G\in M[G]$, we may assume $\dot x\in M$. As $\alpha$ is sufficient, $V_\alpha$ satisfies the forcing theorem. Therefore,
  \begin{align*}
    M[G]\models\varphi(\dot x^G) &\iff \exists p\in G\cap M, M\models p\forces\varphi(\dot x)\\
                                 &\iff \exists p\in G\cap M, V_\alpha\models p\forces\varphi(\dot x)\\
                                 &\iff V_\alpha[G]\models\varphi(\dot x^G).\qedhere
  \end{align*}
\end{proof}

The following is an immediate consequence of the previous lemmas.
\begin{corollary}\label{cor:proper-sufficients}
  Let $\PP$ be a proper forcing such that $\1_\PP\forces_\PP``\dot\QQ$ is proper'', and let $G\subseteq\PP$ be a $V$-generic filter. If $M$ is an $\alpha$-sufficient model for $\PP\ast\dot\QQ$, then $M$ is sufficient for $\PP$ and $M[G]$ is sufficient for $\dot\QQ^G$ in $V[G]$.
\end{corollary}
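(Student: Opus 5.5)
The corollary follows by combining \cref{lemma:sufficient-ord} with \cref{lemma:proper-elem}; the work is to check that each clause of ``sufficient model'' transfers.

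\textbf{Step 1: $M$ is sufficient for $\PP$.} Since $M$ is sufficient for $\PP\ast\dot\QQ$, the ordinal $\alpha$ is sufficient for $\PP\ast\dot\QQ$. By \cref{lemma:sufficient-ord}, $\alpha$ is also sufficient for $\PP$. We know $M\prec V_\alpha$ is countable and $\PP\ast\dot\QQ\in M$. Since $\PP$ is definable from $\PP\ast\dot\QQ$ inside $V_\alpha$ (for instance as the set of first coordinates, or directly as a component of the iteration), elementarity gives $\PP\in M$. Hence $M$ is a sufficient $\alpha$-model for $\PP$. The same argument gives $\dot\QQ\in M$.

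\textbf{Step 2: $M[G]$ is an elementary submodel.} Because $\PP$ is proper and $M$ is sufficient for $\PP$, \cref{lemma:proper-elem} gives $M[G]\prec V[G]_\alpha=V_\alpha[G]$. By \cref{lemma:sufficient-ord}, $V[G]\models$ ``$\alpha$ is sufficient for $\dot\QQ^G$''. Moreover $\dot\QQ^G\in M[G]$ because $\dot\QQ\in M$.

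\textbf{Step 3: countability of $M[G]$ in $V[G]$.} This is the one point needing care. $M$ is countable in $V$, so its set of $\PP$-names is countable. The evaluation map $\dot x\mapsto\dot x^G$, restricted to names in $M$, is a surjection in $V[G]$ from a countable set onto $M[G]$. A set that is a surjective image of $\omega$ is countable without any choice, since we may take least preimages. Thus $M[G]$ is countable in $V[G]$, and it is a sufficient $\alpha$-model for $\dot\QQ^G$ there.

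\textbf{Main obstacle.} The only subtle point is making sure the notion of ``sufficient model'' in $V[G]$ is computed correctly, namely that $M[G]\prec V[G]_\alpha$ as computed in $V[G]$. This is exactly why \cref{lemma:proper-elem} records the identity $V[G]_\alpha=V_\alpha[G]$. The hypothesis that $\dot\QQ$ is forced to be proper is not needed for sufficiency itself. It only matters for later applications, so that $M[G]$ can be fed back into properness of $\dot\QQ^G$.
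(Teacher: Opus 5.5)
Your route is the paper's: \cref{lemma:sufficient-ord} makes $\alpha$ sufficient for $\PP$, and in $V[G]$ for $\dot\QQ^G$, and \cref{lemma:proper-elem} gives $M[G]\prec V_\alpha[G]$. Two of your checks are correct and fill in points the paper leaves implicit: that $\PP\in M$, by definability from $\PP\ast\dot\QQ$ with the order read off as $p\leq p'$ iff $\tup{p,\dot q}\leq\tup{p',\dot q}$, and that $M[G]$ is countable in $V[G]$, as the image of the countable set of names in $M$ with least preimages taken.

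The step that does not go through as written is ``the same argument gives $\dot\QQ\in M$''. In the paper's formalism, $\PP\ast\dot\QQ$ is only the preordered set of pairs $\tup{p,\dot q}$, with $\dot q$ ranging over the canonical set $Q$ of names. The particular name $\dot\QQ$ is not a coordinate of this set. It is also not in general recoverable from it, since the iteration only records what is forced about $\dot\QQ$. So elementarity does not put $\dot\QQ$ itself into $M$. This is exactly why the paper extracts, by elementarity, only some $\dot Q\in M$ whose iteration with $\PP$ matches $\PP\ast\dot\QQ$.

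The fix is short. From the parameter $\PP\ast\dot\QQ\in M$, define in $V_\alpha$ the $\PP$-name for the following preorder:
\begin{itemize}
\item its underlying set is $\{\dot q^G\mid\dot q\in Q\}$, which is all of $\dot\QQ^G$ by the defining property of $Q$;
\item its order is given by $\dot q^G\leq\dot q'^G$ iff $\tup{p,\dot q}\leq\tup{p,\dot q'}$ for some $p\in G$, i.e.\ iff some $p\in G$ forces $\dot q\leq\dot q'$.
\end{itemize}
This name lies in $M$ and is forced to equal $\dot\QQ$. Hence $\dot\QQ^G\in M[G]$, which is all your Step~2 actually needs.
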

\begin{proof}
  By \cref{lemma:sufficient-ord}, $\alpha$ is sufficient for $\PP$ and for $\dot\QQ^G$ in $V[G]$, and therefore $M$ is sufficient for $\PP$. By \cref{lemma:proper-elem}, $M[G]\prec V_\alpha[G]$. Since $\PP$, $\PP\ast\dot\QQ\in M$, there is some $\dot Q\in M$ such that $\PP\ast\dot\QQ\cong\PP\ast\dot Q$. Therefore $\dot\QQ^G\in M[G]$. So $M[G]$ is sufficient for $\dot\QQ^G$ in $V[G]$ as wanted.
\end{proof}

It is worth noting that $\DC$ is not needed to prove either one of \cref{lemma:proper-elem} or \cref{cor:proper-sufficients}. But it is needed to make sense of the definition of properness.
\subsection{Iterability in \texorpdfstring{$\ZF+\DC$}{ZF+DC}}

The rest of the section will be dedicated to the proof of the following theorem.

\begin{theorem}[$\DC$]\label{thm:dc-it}
  Finite support iterations of $\ccc_3$ forcings are $\ccc_3$.
\end{theorem}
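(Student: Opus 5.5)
We use Mekler's characterisation under $\DC$: $\PP$ is $\ccc_3$ if and only if for every sufficient model $M$ for $\PP$, the condition $\1_\PP$ is $M$-generic, equivalently every $p\in\PP$ is $M$-generic. So we aim to show, by induction on the length $\delta$, that if $\PP_\delta$ is a finite support iteration of $\langle\dot\QQ_\beta\mid\beta<\delta\rangle$ with $\1_{\PP_\beta}\forces\ccc_3(\dot\QQ_\beta)$, then every condition of $\PP_\delta$ is $M$-generic for every sufficient $\alpha$-model $M$ containing the iteration. Since $\DC$ is preserved by $\ccc_3$ forcings, $\DC$ continues to hold in every intermediate extension $V[G_\beta]$. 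This makes Mekler's characterisation available there and lets us apply the inductive hypothesis inside the extensions.

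For the \textbf{successor step}, suppose $\PP_{\beta+1}=\PP_\beta\ast\dot\QQ_\beta$ and let $M$ be sufficient for $\PP_{\beta+1}$. By \cref{cor:proper-sufficients}, $M$ is sufficient for $\PP_\beta$, and in $V[G]$ the model $M[G]$ is sufficient for $\dot\QQ_\beta^G$. By the inductive hypothesis, $\1_{\PP_\beta}$ is $M$-generic, so $M[G]\cap V=M$. Because $\dot\QQ_\beta^G$ is $\ccc_3$ in $V[G]$ and $\DC$ holds there, $\1$ is $M[G]$-generic for $\dot\QQ_\beta^G$. The standard two-step argument then applies: given a dense $D\in M$ of $\PP_\beta\ast\dot\QQ_\beta$, consider $D'=\{\dot q^G\mid\exists p\in G,\ \tup{p,\dot q}\in D\}\in M[G]$, which is dense in $\dot\QQ_\beta^G$. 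Then $H\cap D'\cap M[G]\neq\varnothing$, and pulling back through \cref{lemma:proper-elem} gives $(G\ast H)\cap D\cap M\neq\varnothing$.

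For the \textbf{limit step}, let $\delta$ be a limit and $M$ sufficient, and fix $p\in\PP_\delta$ and a dense $D\in M$. We show that $p$ forces $\dot G\cap D\cap M\neq\varnothing$ by the usual density argument. Given $q\leq p$:
\begin{enumerate}
\item If $\operatorname{cf}(\delta)>\omega$, then $\sup(M\cap\delta)<\delta$ need not hold, so set $\gamma=\sup(M\cap\delta)$. Every condition in $M$ has support inside $M\cap\delta$, since supports are finite and definable, hence contained in $M$. We restrict $q$ to $\gamma$ (or to some $\beta\in M$ above $\max(\supp q\cap\gamma)$ when $\gamma\notin M$).
\item If $\operatorname{cf}(\delta)=\omega$, pick $\beta\in M\cap\delta$ with $\supp q\cap M\subseteq\beta$.
\end{enumerate}
Then we apply the inductive hypothesis at $\beta<\delta$: $q\restriction\beta$ is $M$-generic for $\PP_\beta$. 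Work below an extension $r\leq q\restriction\beta$ in the projected dense set
\[D_\beta=\{s\restriction\beta\mid s\in D\}\in M.\]
Note that $D_\beta$ is dense and in $M$, so $r$ may be taken compatible with some $s\restriction\beta$ with $s\in D\cap M$. Since $\supp s\subseteq M$ while $\supp q\setminus\beta$ is disjoint from $M$, the condition $r\cup s\restriction[\beta,\delta)\cup q\restriction[\beta,\delta)$ is a common extension of $q$ and $s$. The ground-model choices here are single choices, and the conditions themselves are finite functions of names, so no choice beyond what $\DC$ provides is needed.

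The \textbf{main obstacle} is the limit case, specifically the appeal to $r\compt s\restriction\beta$ with $s\in D\cap M$. This requires that the genericity of $q\restriction\beta$ over $M$ applies to $D_\beta$, and that supports of elements of $M$ lie in $M$; the latter needs $\omega\subseteq M$ and finiteness. A further point to check carefully is that the passage to $M[G]$ in the successor step (\cref{lemma:proper-elem}) genuinely requires only sufficiency, as remarked after \cref{cor:proper-sufficients}.
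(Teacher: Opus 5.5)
Your proof is correct and follows essentially the paper's route. It proceeds by induction on the length: the successor case is the standard two-step argument behind \cref{lemma:succ}, which you run with generic filters and $M[G]\cap V=M$ rather than through the predense set of first coordinates. The limit case projects to $\PP_\beta$ for some $\beta\in M\cap\delta$, uses $\ccc_3(\PP_\beta)$ plus elementarity there, and glues above $\beta$ because conditions in $M$ have support inside $M$, as in \cref{lemma:ctble-cf}.

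Your limit step picks a single $\beta\in M\cap\delta$ above $\supp q\cap M$ for each $q$ and shows that $D\cap M$ itself is predense. This is a bit cleaner than the paper's assembly of $F=\bigcup_\alpha F_\alpha$. However, the cofinality split is superfluous, and your aside is wrong: when $\operatorname{cf}(\delta)>\omega$ one always has $\sup(M\cap\delta)<\delta$, since $M$ is countable.
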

To prove the theorem we break down the two cases: two-step iterations for the successor case; and limit steps. Throughout the rest of the section we work in $\ZF+\DC$.

\begin{lemma}\label{lemma:succ}
  Suppose that $\PP$ is proper and $\1\forces_\PP``\dot\QQ$ is proper''. Let $M$ be sufficient for $\PP\ast\dot\QQ$, and $p\in\PP$ is $M$-generic such that $p\forces_\PP``\dot q$ is $\check M[\dot G]$-generic''. Then $\tup{p,\dot q}$ is $M$-generic for $\PP\ast\dot\QQ$.
\end{lemma}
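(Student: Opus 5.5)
We follow the standard proof of the two-step iteration theorem for properness. We check $M$-genericity through the ``generic filter'' characterisation: $\tup{p,\dot q}$ is $M$-generic exactly when $\tup{p,\dot q}\forces (G\ast H)\cap M\cap D\neq\varnothing$ for every dense $D\subseteq\PP\ast\dot\QQ$ with $D\in M$. (Equivalently, $\tup{p,\dot q}\forces\check M[G\ast H]\cap\mathrm{Ord}=\check M\cap\mathrm{Ord}$, but the dense-set formulation avoids that detour.)

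\textbf{Setting up.} Fix a dense open $D\in M$ and a $V$-generic $G\ast H$ with $\tup{p,\dot q}\in G\ast H$; then $p\in G$ and $q:=\dot q^G\in H$. By \cref{cor:proper-sufficients}, $M$ is sufficient for $\PP$, and in $V[G]$ the model $M[G]$ is sufficient for $\QQ=\dot\QQ^G$. Since $p$ is $M$-generic, $G\cap M$ meets every dense set of $\PP$ in $M$. By hypothesis, $q$ is $M[G]$-generic for $\QQ$, so $H$ meets every dense subset of $\QQ$ lying in $M[G]$.

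\textbf{Projecting $D$.} In $V[G]$ let
\[D/G=\{\dot r^G\mid \exists p'\in G,\ \tup{p',\dot r}\in D\}.\]
This set is dense in $\QQ$: given $\dot s^G\in\QQ$, a density argument below conditions in $G$ finds $p'\in G$ and $\dot r$ with $\tup{p',\dot r}\in D$ and $\dot r^G\leq \dot s^G$. Here we use that $G$ is generic and that $D$ is open and dense. Since $D,\PP,\dot\QQ\in M$, the name $\dot E=\{\tup{p',\dot r}^\bullet\text{-style name for }D/\dot G\}$ is definable from parameters in $M$, so $\dot E\in M$ and $D/G=\dot E^G\in M[G]$. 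By the genericity of $q$, there is some $r\in H\cap M[G]\cap D/G$.

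\textbf{Pulling back into $M$.} This is the step we expect to be the main obstacle. Because $r\in M[G]$, we may pick $\dot r\in M$ with $\dot r^G=r$; sufficiency of $M$, as in \cref{lemma:proper-elem}, lets us assume the name lies in $M$. That $r\in D/G$ is witnessed in $V[G]$ by some $p'\in G$ and some name $\dot r'$ with $\tup{p',\dot r'}\in D$ and $\dot r'^G=r$. We need the witnesses to lie in $M$. Consider the set
\[D'=\{p'\in\PP\mid p'\forces\dot r\notin\dot E,\ \text{or}\ \exists\dot r'\,(\tup{p',\dot r'}\in D \wedge p'\forces\dot r'=\dot r)\}.\]
$D'$ is definable in $V_\alpha$ from parameters in $M$, so $D'\in M$, and by genericity arguments it is dense. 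By $M$-genericity of $p$, some $p'\in G\cap M\cap D'$ exists. Since $r\in D/G$, the first alternative fails, so $p'$ satisfies the second. Elementarity $M\prec V_\alpha$ then lets us choose the witness $\dot r'\in M$, and we take it in the canonical set $Q$ so that it is a condition. Hence $\tup{p',\dot r'}\in D\cap M$. Finally $p'\in G$ and $\dot r'^G=r\in H$, so $\tup{p',\dot r'}\in (G\ast H)\cap M\cap D$, as required.

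Throughout we must check that each definition relativises correctly to $V_\alpha$. This relies on $\alpha$ reflecting the forcing theorem and on $V_\alpha[G]=V[G]_\alpha$. The uniform choice of canonical names in $Q$ replaces the mixing lemma, so no appeal to choice beyond $\DC$ (used only for the existence of $M$) is needed.
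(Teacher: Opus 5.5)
Your argument is correct and is essentially the paper's proof, phrased with a generic $G\ast H\ni\tup{p,\dot q}$ instead of an arbitrary extension $\tup{p_1,\dot q_1}\leq\tup{p,\dot q}$: you read $D$ as a $\PP$-name for a dense subset of $\dot\QQ$ lying in $M$, use the $M[G]$-genericity of $\dot q^G$ to get a condition of $D^G\cap M[G]$ with a name in $M$, and pull it back through an auxiliary set of $\PP$-conditions in $M$ (your $D'$, the paper's $F$). Your requirement $p'\forces\dot r'=\dot r$, dense by openness of $D$, is actually sharper than the paper's $\bar p\nforces\dot q'_0\neq\dot q_0$, which on its own does not secure the final compatibility; the one thing to drop is your parenthetical ``equivalently'' about ordinals, since without maximal antichains and well-orders only one direction of that equivalence is available (the converse is the paper's closing open question), though you never use it.
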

\begin{proof}
  Let $\PP$, $\dot\QQ$, $p$, $\dot q$, and $M$ be as stated. Let $D\in M$ be a dense open subset of $\PP\ast\dot\QQ$. Let us verify that $D\cap M$ is predense below $\tup{p,\dot q}$. Note that $\dom D\in M$ is a dense open subset of $\PP$, and that $D$ is $\PP$-name for a dense subset of $\dot\QQ$.

  Let $\tup{p_1,\dot q_1}\leq\tup{p,\dot q}$. As $p_1\leq_\PP p$, it must agree that $\dot q$ is $\check M[\dot G]$-generic. Since $D$ is a name for a dense subset of $\dot\QQ$, $p_1\forces_\PP\exists\dot q_0\in D\cap\check M[\dot G],\dot q_0\compt\dot q_1$. By extending $p_1$ if necessary, we may assume that for some $\dot q_0\in M$, $p_1\forces_\PP\dot q_0\in D\land\dot q_0\compt\dot q_1$.

  Consider the set $F=\{\bar p\in\PP\mid\tup{\bar p,\dot q_0'}\in D,\bar p\nforces_\PP\dot q'_0\neq\dot q_0\}$. We claim that $F$ is predense below $p_1$. If $p'\leq_\PP p_1$, then $p'\forces_\PP\dot q_0\in D$, in particular, there is some $r\leq_\PP p'$ and some $\tup{\bar p,\dot q'_0}\in D$ such that $r\leq_\PP\bar p$ and $r\forces_\PP\dot q'_0=\dot q_0$. In particular, such $\bar p\in F$ and it is compatible with $p'$.

  As $F$ is definable from $\dot q_0$, $D$, and $\PP$, we have that $F\in M$. Therefore, by the $M$-genericity of $p_1$, there is some $p_0\in F\cap M$ such that $p_1\compt p_0$. As $p_0\in F\cap M$, by elementarity of $M$ it must be that there is some $\dot q'_0\in M$ witnessing this. In particular, $\tup{p_0,\dot q'_0}\in D\cap M$, and $\tup{p_1,\dot q_1}\compt\tup{p_0,\dot q'_0}$. Therefore, $D\cap M$ is predense below $\tup{p,\dot q}$ as wanted.
\end{proof}
An immediate corollary from the above provides us with the two-step iteration.
\begin{corollary}\label{cor:ccc*ccc}
  If $\PP$ is proper and $\1\forces_\PP``\dot\QQ$ is proper'', then $\PP\ast\dot\QQ$ is proper. Similarly, if $\PP$ is $\ccc_3$ and $\1\forces_\PP``\dot\QQ$ is $\ccc_3$'', then $\PP\ast\dot\QQ$ is $\ccc_3$.\qed
\end{corollary}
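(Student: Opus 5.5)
The statement to prove is \cref{cor:ccc*ccc}. The plan is to apply \cref{lemma:succ} directly, supplying the needed generic conditions.

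\textbf{Properness.} Fix a large enough sufficient ordinal $\alpha$ for $\PP\ast\dot\QQ$. Let $M$ be a sufficient $\alpha$-model for $\PP\ast\dot\QQ$, and let $\tup{p_0,\dot q_0}\in M$ be a condition.

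First, by \cref{cor:proper-sufficients}, $M$ is sufficient for $\PP$. Properness of $\PP$ then gives an $M$-generic $p\leq p_0$.

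Next, let $G$ be any $V$-generic filter with $p\in G$. By \cref{cor:proper-sufficients} again, $M[G]$ is sufficient for $\dot\QQ^G$ in $V[G]$. Since $\alpha$ was taken large enough, $\alpha$ also lies in the range where $\dot\QQ^G$'s properness applies (using \cref{lemma:sufficient-ord}). Moreover $\dot q_0^G\in M[G]$. So $V[G]$ believes there is an $M[G]$-generic $q\leq\dot q_0^G$.

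Hence $p\forces_\PP\exists q\leq\dot q_0$ such that $q$ is $\check M[\dot G]$-generic. We now need to pick a single name $\dot q$ witnessing this. This is the delicate point, since in $\ZF$ the mixing lemma is unavailable. I would handle it with the canonical name set $Q$ from the preliminaries together with a fullness argument. The plan is to take $\dot q$ as the name for the meet of $\dot q_0$ with a witness when one exists, defaulting otherwise. A name for "some generic condition" can be formed directly as the name of the set of $M[G]$-generic conditions below $\dot q_0^G$. The existence of a single element is then obtained by applying the existential quantifier inside the forcing relation, and $\DC$ (which $\ZF+\DC$ gives us) is enough for this. Alternatively, it suffices to verify \cref{lemma:succ}'s proof with "there exists an $M[G]$-generic $q$" quantified inside the forcing, since that proof only uses $p_1\forces\exists\dot q_0\in D\cap\check M[\dot G]$ compatible with $\dot q_1$.

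With $\dot q$ in hand, \cref{lemma:succ} shows that $\tup{p,\dot q}\leq\tup{p_0,\dot q_0}$ is $M$-generic. This proves properness.

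\textbf{$\ccc_3$.} Under $\DC$, $\ccc_3$ is equivalent to "$\1$ is $M$-generic for every sufficient $M$" \cite[Proposition~5.1]{ccc}. Take $p=\1_\PP$, which is $M$-generic by the $\ccc_3$ property of $\PP$. Also $\1_\PP$ forces that $\dot\QQ$ is $\ccc_3$, and $\ccc_3$ is preserved at that level: $\DC$ is preserved by $\ccc_3$ forcing, so $V[G]\models\DC$ and the equivalence applies there too. Hence $\1_\PP$ forces that $\1_\QQ$ is $\check M[\dot G]$-generic. No choice of names is needed here, since $\dot q=\1_\QQ$ is canonical. \cref{lemma:succ} then gives that $\tup{\1,\1}$ is $M$-generic, so $\PP\ast\dot\QQ$ is $\ccc_3$.

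The main obstacle is the witness selection in the proper case, together with matching the sufficient ordinals between $V$ and $V[G]$. The $\ccc_3$ case avoids both issues.
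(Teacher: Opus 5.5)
Your overall route is the paper's: the corollary is meant to follow directly from \cref{lemma:succ}. Your $\ccc_3$ half is correct. $\1_\PP$ is $M$-generic, $M[G]$ is sufficient for $\dot\QQ^G$ by \cref{cor:proper-sufficients}, $\DC$ survives into $V[G]$ so the sufficient-model characterisation of $\ccc_3$ applies there, and \cref{lemma:succ} then makes $\tup{\1,\1}$ $M$-generic.

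The properness half has a gap at exactly the point you flag. You keep the $M$-generic $p$ and look for one name $\dot q$ with $p\forces$ ``$\dot q\leq\dot q_0$ and $\dot q$ is $\check M[\dot G]$-generic''. Passing from $p\forces\exists x\,\varphi(x)$ to a name $\dot x$ with $p\forces\varphi(\dot x)$ is the fullness (mixing) principle. It needs a maximal antichain below $p$ and a choice of witnessing names along it, and that antichain may be uncountable. $\DC$ does not supply such choices; the paper itself recalls that the mixing lemma is equivalent to $\AC$. So ``$\DC$ is enough'' is not justified. Your fallbacks do not close the gap either:
\begin{itemize}
\item a name for the set of $M[G]$-generic conditions below $\dot q_0^G$ is not a name for a condition of $\dot\QQ$;
\item rerunning the proof of \cref{lemma:succ} with the quantifier inside the forcing still does not produce an element of $\PP\ast\dot\QQ$, and properness asks for such an element.
\end{itemize}

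The fix is to give up on $p$ itself. Since $p\forces\exists x\,(x\leq\dot q_0$ and $x$ is $\check M[\dot G]$-generic$)$, the set of $r\leq p$ for which some name $\dot q$ satisfies $r\forces$ ``$\dot q\leq\dot q_0$ and $\dot q$ is $\check M[\dot G]$-generic'' is dense below $p$. This is just the forcing relation for existential formulas and holds in $\ZF$. By the defining property of the canonical set $Q$ you may take $\dot q\in Q$. Fix one such pair $r,\dot q$; this is a single instantiation, not a choice function. Every extension of an $M$-generic condition is $M$-generic, so \cref{lemma:succ} applied to $r$ and $\dot q$ shows that $\tup{r,\dot q}\leq\tup{p_0,\dot q_0}$ is $M$-generic. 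That is all properness requires, and no mixing is used.

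Your handling of the ``large enough'' threshold for $\dot\QQ^G$ is fine. A name for an ordinal has only set-many possible values by Replacement, so a single $\alpha$ works for every $G$.
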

\begin{lemma}\label{lemma:ctble-cf}
  Let $\delta$ be a limit ordinal and let $\PP_\delta=\bigast_{\alpha<\delta}\dot\QQ_\alpha$ be the finite support iteration of $\ccc_3$ forcings. If for all $\gamma<\delta$, $\PP_\gamma$ is $\ccc_3$, then $\PP_\delta$ is $\ccc_3$.
\end{lemma}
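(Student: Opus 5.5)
We work in $\ZF+\DC$ and use the characterisation via sufficient models: by \cite[Proposition~5.1]{ccc}, a forcing $\PP$ is $\ccc_3$ if and only if for every sufficient model $M$ for $\PP$, the condition $\1_\PP$ is $M$-generic. So fix a large enough sufficient ordinal $\alpha$ and a countable $M\prec V_\alpha$ with $\PP_\delta\in M$. Let $D\in M$ be dense open in $\PP_\delta$ and $p\in\PP_\delta$ arbitrary. We must find some $r\in D\cap M$ compatible with $p$. We split into two cases according to the cofinality of $\delta$. Since $M$ is countable and contains $\delta$, there are two possibilities: either $\sup(M\cap\delta)=\delta$, which happens when $\mathrm{cf}(\delta)=\omega$, or $\sup(M\cap\delta)<\delta$.

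\emph{Case $\sup(M\cap\delta)=\delta$.} Since $p$ has finite support, there is some $\gamma\in M\cap\delta$ with $\supp p\subseteq\gamma$. Extend $p$ to some $p'\in D$, and consider $D_\gamma=\{q\restriction\gamma\mid q\in D\}$. This set lies in $M$ and is dense in $\PP_\gamma$. By hypothesis $\PP_\gamma$ is $\ccc_3$, and $M$ is sufficient for $\PP_\gamma$, by the restriction part of \cref{lemma:sufficient-ord} applied to $\PP_\delta\cong\PP_\gamma\ast\dot\PP_{[\gamma,\delta)}$. Hence $\1_{\PP_\gamma}$ is $M$-generic, so there is some $q\in D\cap M$ with $q\restriction\gamma\compt p'\restriction\gamma$. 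The problem is that the tail of $q$ past $\gamma$ might conflict with $p'$. To avoid this we use $p$ rather than $p'$. Since $p$ is trivial above $\gamma$, take $s\le p\restriction\gamma, q\restriction\gamma$ in $\PP_\gamma$. Then $s^\frown q\restriction[\gamma,\delta)$ extends both $p$ and $q$, because $q$ lives in the iteration and $s\le q\restriction\gamma$. So $q\compt p$, which settles this case. In fact this argument does not use $p'$ at all.

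\emph{Case $\sup(M\cap\delta)=\gamma^*<\delta$.} Here the argument is the usual $\Delta$-system-free "reflection" argument. Write $p=p\restriction\gamma^*{}^\frown p\restriction[\gamma^*,\delta)$. Since $\supp(p)\cap\gamma^*$ is finite and $M\cap\delta$ is cofinal in $\gamma^*$, we can pick $\gamma\in M$ with $\supp(p)\cap\gamma^*\subseteq\gamma$. Now strengthen $p$ to some $p'\in D$, and apply the same trick with $D^*=\{q\restriction\gamma\mid q\in D,\ \supp q\cap[\gamma,\delta)\ \text{arbitrary}\}$. The worry is that $q\in M$ has support inside $M\cap\delta\subseteq\gamma^*$. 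Such a support can be bounded in $\gamma^*$ but might meet $[\gamma,\gamma^*)$, where $p$ is trivial, so no conflict arises there. Beyond $\gamma^*$, $q$ is trivial. Consequently any $q\in D\cap M$ with $q\restriction\gamma\compt p\restriction\gamma$ is compatible with $p$: we amalgamate a common extension on $\gamma$, then copy $q$ on $[\gamma,\gamma^*)$ and $p$ on $[\gamma^*,\delta)$. Such a $q$ exists by $M$-genericity of $\1_{\PP_\gamma}$ applied to $D_\gamma\in M$.

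The main obstacle is the legitimacy of the amalgamation step $s^\frown q\restriction[\gamma,\delta)$. The tail of $q$ consists of $\PP_\xi$-names whose meaning depends on the generic up to $\xi$. So one must check that if $s\le q\restriction\gamma$ in $\PP_\gamma$, then the function agreeing with $s$ below $\gamma$ and with $q$ above is a condition below $q$. This holds by the definition of the order on finite support iterations, using the canonical names $Q$ from the preliminaries. The point requiring the most care is that $M$ is sufficient for each $\PP_\gamma$ with $\gamma\in M$. We obtain this from \cref{lemma:sufficient-ord} and \cref{cor:proper-sufficients} together with the factorisation $\PP_\delta\cong\PP_\gamma\ast\dot\PP_{[\gamma,\delta)}$, which is definable in $M$. $\DC$ enters only to guarantee that countable elementary submodels exist, so that the equivalence of \cite[Proposition~5.1]{ccc} holds.
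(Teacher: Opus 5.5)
Your proof is correct and is essentially the paper's argument. Both take a countable elementary submodel $M$, restrict $D$ to $\PP_\gamma$ for $\gamma\in M\cap\delta$, pull witnesses back into $D\cap M$ by elementarity, and glue a common extension below $\gamma$ with the appropriate tails, split according to how $\supp(p)$ sits relative to $\sup(M\cap\delta)$. Your choice in Case 2 of $\gamma\in M$ above all of $\supp(p)\cap\sup(M\cap\delta)$ makes the gluing a little more explicit than the paper's $\max(\supp(p)\cap M)+1$.

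One detour is unnecessary. You do not need $M$ to be sufficient for $\PP_\gamma$, and \cref{cor:proper-sufficients} is not applicable as stated anyway, because properness of the tail is not known; only \cref{lemma:sufficient-ord} would apply. $\ccc_3(\PP_\gamma)$ together with elementarity already gives a countable predense subset of $D_\gamma$ that lies in $M$, and hence is contained in $M$. This is exactly how the paper proceeds.
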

\begin{proof}
  Let $D$ be a predense subset of $\PP_\delta$, and let $M$ be a sufficient model such that $D\in M$. For each $\alpha\in M\cap\delta$, the set $D\restriction\alpha = \{p\restriction\alpha\mid p\in D\}$ is predense in $\PP_\alpha$. Otherwise, there is some $p_\alpha\in\PP_\alpha$ which is incompatible with all $p\restriction\alpha$ where $p\in D$. However, $p_\alpha^\frown\1_{[\alpha,\delta)}$ is compatible with some $p\in D$, which means that $p_\alpha$ is compatible with $p\restriction\alpha$.

  Since $\PP_\alpha$ is $\ccc_3$, $D\restriction\alpha$ has a countable predense subset, $E_\alpha = \{p^\alpha_n\mid n<\omega\}$. Since $D$ and $\alpha$ are in $M$, by elementarity we may assume $E_\alpha\in M$. In turn, this implies that $E_\alpha\subseteq M$.\footnote{Recall that if $M\prec V_\alpha$ is countable, and $E\in M$ is a countable set, then $E\subseteq M$.}

  For each  $\alpha\in M\cap\delta$ and $n<\omega$, if $p^\alpha_n\in E_\alpha$, then there is some $q\in D$ such that $p=q^\alpha_n\restriction\alpha$. As $p,D$, and $\alpha$ are all in $M$, we can find such $q\in M$ by elementarity, and so we may fix one as $q^\alpha_n$. Therefore, we can define the sets $F_\alpha=\{q^\alpha_n\mid n<\omega\}$ for $\alpha\in M\cap\delta$, and $F=\bigcup_{\alpha\in M\cap\delta}F_\alpha$ is a countable subset of $D$. If we show that $F$ is predense in $\PP_\delta$, the proof will be complete.

  Let $p\in\PP_\delta$ be any condition and let $\alpha=\max\supp(p)+1$, which exists as $p$ has a finite support. There are two cases to deal with.
\begin{description}
\item[$\boldsymbol{\alpha<\sup M\cap\delta}$] Let $\beta\in M\cap\delta$ such that $\alpha<\beta$. Then there is some $p_n^\beta\in E_\beta$ which is compatible (in $\PP_\beta$) with $p\restriction\beta$. Therefore, $q_n^\alpha\in F_\alpha\subseteq F$ is compatible with $p=p\restriction\beta^\frown\1_{[\beta,\delta)}$.

\item[$\boldsymbol{\alpha\geq\sup M\cap\delta}$] Let $\alpha'=\max(\supp(p)\cap M)+1$, then $p\restriction\alpha'^\frown\1_{[\alpha',\delta)}$ is compatible, by the previous case, with some $q\in F$. However, $\supp(q)\subseteq M$ and therefore $q$ must be compatible with $p$ as well.
\end{description}
  In either case, $p$ is compatible with some element of $F$, and therefore it is a predense set as wanted.
\end{proof}

\begin{proof}[Proof of \cref{thm:dc-it}]
  We prove this by induction on the length of the iteration. For $\alpha=\beta+1$ we have that $\PP_{\beta+1}=\PP_\beta\ast\dot\QQ_\beta$, so by \cref{cor:ccc*ccc} the iteration is $\ccc_3$, and if $\alpha$ is a limit ordinal, then by \cref{lemma:ctble-cf} the iteration is $\ccc_3$.
\end{proof}

\section{Knaster does not imply \texorpdfstring{$\ccc_3$}{ccc3}}\label{sect:K}
The countable chain condition can be stated as ``given uncountably many conditions, two of them must be compatible''. This can be strengthened to the \emph{Knaster property} which states that given uncountably many conditions, uncountably many of them are pairwise compatible. While it is consistent with $\ZFC$ that ccc and Knaster are equivalent,\footnote{E.g., Martin's Axiom proves that the two notions agree.} it is also consistent that there are ccc forcings which are not Knaster.\footnote{E.g., a Suslin tree is ccc, but it is not Knaster.}

Much like with the chain condition, one can ask about ``the real meaning of Knaster'' in $\ZF$. We first explore some versions of ``uncountable'', and in \cref{thm:knaster} we show that a Knaster forcing need not be $\ccc_3$, even if $\ZF+\DC$ is assumed.

\subsection{Uncountability revised}
The standard definition of the Knaster property says that given uncountably many conditions, there are uncountably many of them which are pairwise compatible. This is a strengthening of the countable chain condition. In the choiceless context, however, the meaning of uncountably can be changed. Taken literally, it just means ``not countable'', but other interpretations can be made. We will focus on these common ones.
\begin{definition} Given a set $X$ we say that it is $(a)$-uncountable, for $a\in\{1,2,3,4\}$, if it satisfies the corresponding statement:
  \begin{enumerate}
  \item $\aleph_1\leq|A|$.
  \item $\aleph_0<|A|$.
  \item $\aleph_0<^*|A|$.
  \item $|A|\nleq\aleph_0$.
  \end{enumerate}
\end{definition}
\begin{proposition}
  For $i\leq j$, every $(i)$-uncountable, is $(j)$-uncountable as well.\qed
\end{proposition}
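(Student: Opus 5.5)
Since the implications compose, it suffices to prove the three consecutive implications $(1)\Rightarrow(2)\Rightarrow(3)\Rightarrow(4)$. Throughout, $|A|\leq|B|$ means there is an injection from $A$ into $B$, $|A|\leq^*|B|$ means there is a partial surjection from $B$ onto $A$, and strict inequalities mean ``$\leq$ but not $\geq$'' in the respective sense.

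For $(1)\Rightarrow(2)$, fix an injection $f\colon\omega_1\to A$. Restricting $f$ to $\omega$ gives $\aleph_0\leq|A|$. If there were also an injection $g\colon A\to\omega$, then $g\circ f$ would inject $\omega_1$ into $\omega$. This is impossible in $\ZF$, since every subset of $\omega$ is well-ordered of type at most $\omega$. Hence $\aleph_0<|A|$.

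For $(2)\Rightarrow(3)$, an injection $\omega\to A$ inverts to a partial surjection from $A$ onto $\omega$, so $\aleph_0\leq^*|A|$. Suppose, towards a contradiction, that $|A|\leq^*\aleph_0$, that is, some partial function $h$ from $\omega$ maps onto $A$. Then $a\mapsto\min h^{-1}(a)$ is an injection $A\to\omega$, which contradicts $|A|\nleq\aleph_0$. So $\aleph_0<^*|A|$.

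For $(3)\Rightarrow(4)$, suppose $|A|\leq\aleph_0$, witnessed by an injection $g\colon A\to\omega$. Then $g^{-1}$ is a partial surjection from $\omega$ onto $A$, so $|A|\leq^*\aleph_0$, contradicting $(3)$. Note that this step uses only the second half of $(3)$.

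No step is a real obstacle, and none of them uses any choice. The only points needing care are that $|A|\leq\aleph_0$ and $|A|\leq^*\aleph_0$ are equivalent in $\ZF$, because $\omega$ is well-ordered and we can take least preimages, and that $\omega_1$ does not inject into $\omega$.
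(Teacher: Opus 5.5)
Your proof is correct. The paper states this proposition without proof, as immediate, and your chain $(1)\Rightarrow(2)\Rightarrow(3)\Rightarrow(4)$, which uses least preimages to turn partial surjections from $\omega$ into injections into $\omega$, is exactly the routine choice-free verification being left to the reader. Your reading of $<^*$ as ``$\leq^*$ but not $\geq^*$'' is harmless, because at $\aleph_0$ it coincides with ``$\leq^*$ and not equal'': a set that maps onto $\omega$ and is a partial image of $\omega$ is countably infinite.
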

\begin{proposition}
  None of the implications can be reversed in $\ZF$.\footnote{These are all well-known facts. We provide a sketch of the proofs here for the completeness of the paper.}
\end{proposition}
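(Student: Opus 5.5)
The plan is to give, for each consecutive pair $(i)\Rightarrow(i+1)$ with $i=1,2,3$, a model of $\ZF$ and a set $A$ which is $(i+1)$-uncountable but not $(i)$-uncountable. Each such model can either be quoted from the literature or built with \cref{thm:transfer} from a suitable structure. Before that, I will record the easy observation that $|A|\leq^*\aleph_0$ implies $|A|\leq\aleph_0$: a partial surjection from $\omega$ onto $A$ yields an injection back by sending $a$ to the least preimage. So for infinite $A$, condition $(3)$ fails exactly when $A$ does not map onto $\omega$, and condition $(4)$ just says that $A$ is uncountable in the literal sense. Condition $(2)$ fails for uncountable $A$ exactly when $A$ is Dedekind-finite.

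\emph{$(2)$ but not $(1)$.} Take $A=\mathbb R$ in a model where $\aleph_1\nleq 2^{\aleph_0}$, for instance Solovay's model, or a model where every set of reals has the Baire property. Then $\aleph_0\leq|\mathbb R|$ and the reals are not countable, so $(2)$ holds, but there is no injection of $\omega_1$ into $\mathbb R$. Alternatively, apply \cref{thm:transfer} to $M=2^\omega$ with no extra structure, $\sG$ the homeomorphisms of $2^\omega$ induced by permuting coordinates of each real, and $\cI$ the countable sets; one then checks that a stable injection $\omega_1\to M$ cannot exist. Quoting Solovay's model is cleaner.

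\emph{$(3)$ but not $(2)$.} Use a Russell-socks style family: countably many pairs $P_n$ such that no infinite subfamily admits a choice function. Let $A=\bigcup_n P_n$. The map $a\mapsto n$ for $a\in P_n$ is a surjection onto $\omega$, and $A$ is not countable, so $\aleph_0<^*|A|$. On the other hand, an injection $\omega\to A$ would meet infinitely many $P_n$ and would therefore give a choice function on an infinite subfamily, so $A$ is Dedekind-finite and $(2)$ fails. To get this model, apply \cref{thm:transfer} to $M=\omega\times 2$ with the equivalence relation ``same first coordinate'', $\sG$ the group of flips $\prod_n\mathbb Z/2$, and $\cI$ the finite sets. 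A stable injection $\omega\to M$ is then fixed by all flips outside some finite $e$, so it can only hit finitely many pairs, which is impossible.

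\emph{$(4)$ but not $(3)$.} Take an amorphous set $A$: an infinite set every subset of which is finite or cofinite. It is infinite, and an infinite amorphous set cannot inject into $\omega$, so $(4)$ holds. If $f\colon A\to\omega$ were a partial surjection, the preimages $f^{-1}(\{2n\mid n<\omega\})$ and $f^{-1}(\{2n+1\mid n<\omega\})$ would be disjoint infinite subsets of $A$. That contradicts amorphousness, so $(3)$ fails. The model comes from \cref{thm:transfer} with $M=\omega$ in the empty language, $\sG$ all permutations, and $\cI$ the finite sets; this is the basic Fraenkel model. The only step needing real care is the verification in the first case, and I would simply cite Solovay rather than prove it.
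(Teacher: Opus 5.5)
Your treatment of $(4)$ versus $(3)$ is the paper's: the amorphous copy of $\omega$ obtained from \cref{thm:transfer}. Your Russell-socks witness for $(3)$ versus $(2)$ is a sound alternative to the paper's infinite Dedekind-finite set of reals in Cohen's first model. One formal point there: \cref{thm:transfer} only speaks about subsets of $N^k$, so you should add unary predicates for the individual pairs to the language, so that the map $a\mapsto n$ lies in $W$.

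The gap is in $(2)$ versus $(1)$. Solovay's model does satisfy $\aleph_1\nleq2^{\aleph_0}$, but it is built from an inaccessible cardinal. So this only shows that $\ZF$ does not prove $(2)\Rightarrow(1)$ \emph{assuming an inaccessible is consistent}. The statement, and the paper's argument, are relative to $\ZF$ alone. Neither of your fallbacks repairs this:
\begin{itemize}
\item Models in which every set of reals has the Baire property need not satisfy $\aleph_1\nleq2^{\aleph_0}$. Shelah showed that $\ZF+\DC+{}$``every set of reals has the Baire property'' is equiconsistent with $\ZF$. But $\aleph_1\nleq2^{\aleph_0}$ forces $\omega_1^{L[a]}<\omega_1$ for every real $a$. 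Together with $\DC$ (so $\omega_1$ is regular), this makes $\omega_1$ inaccessible in $L$.
\item The transfer attempt with coordinate permutations of $2^\omega$ and the countable ideal is vacuous. A coordinate permutation fixing each of the countably many reals $\chi_{\{n\}}$ is the identity. Hence every subset of $M^k$ is stable, and nothing rules out an injection of $\omega_1$.
\end{itemize}

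The repair is to give up $\mathbb{R}$ as the witness, as the paper does. Apply \cref{thm:transfer} to $\omega_1$ in the empty language, with the full symmetric group and $\cI=[\omega_1]^{<\omega_1}$. The stable subsets of $\omega_1$ are then exactly the countable and co-countable ones, so the copy $N$ has no subset which is uncountable and co-uncountable. An injection $\omega_1\to N$ would produce such a subset (the image of the even ordinals), so $\aleph_1\nleq|N|$. On the other hand, $N$ has a countably infinite subset and is not well-orderable: a transposition of two points outside a countable support would be a nontrivial automorphism of the well-order. So $N$ is $(2)$-uncountable. This needs no large cardinals, and it even gives $\DC$, which the paper uses in the subsequent remark.

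If you want to keep $\mathbb{R}$ as the witness, use a model such as Feferman--L\'evy's, where $\aleph_1\nleq2^{\aleph_0}$ holds without an inaccessible. The price is losing $\DC$, which by the argument above cannot be kept without an inaccessible.
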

\begin{proof}
  To show that $(4)$-uncountable does not imply $(3)$-uncountable, we use \cref{thm:transfer} with the structure $\omega$, $\sG$ the full symmetry group of $\omega$, and $\cI=[\omega]^{<\omega}$. As the only stable subsets of $\omega$ are the finite and co-finite ones, the symmetric copy cannot be mapped onto $\omega$, and so it is $(4)$-uncountable, but not $(3)$-uncountable.

  Next, to show that $(3)$-uncountable does not imply $(2)$-uncountability, note that in Cohen's first model there exists an infinite Dedekind-finite set\footnote{Recall that a set $X$ is \emph{Dedekind-finite} if its cardinal is incomparable with $\aleph_0$.} (which is $(4)$-uncountable) of real numbers. As every infinite set of real numbers can be mapped onto $\omega$, it must be that this set is $(3)$-uncountable, but as it is Dedekind-finite, it cannot be $(2)$-uncountable.

  Finally, using \cref{thm:transfer}, let $M$ be the structure $\omega_1$ in the empty language, letting $\sG$ be its symmetry group, and letting $\cI=[\omega_1]^{<\omega_1}$. Then there is a symmetric copy of $M$, $N$, whose subsets are exactly those stable under fixing countably many points. It is not hard to check, then that there is no subset of $N$ which is uncountable and co-uncountable. In particular, it must be the case that $|N|$ is incomparable with $\aleph_1$, and so it is not $(1)$-uncountable, and since $N$ is both infinite and has a countably infinite subset, it is $(2)$-uncountable.
\end{proof}
The careful reader will see that the final model satisfies $\DC$. Therefore, $\ZF+\DC$ does not prove that $(2)$-uncountable sets are $(1)$-uncountable sets. More is true, as stated next.
\begin{proposition}
If $\DC$ holds, then $(2)$-, $(3)$-, and $(4)$-uncountable are all equivalent. If $\DC_{\omega_1}$ holds, then all four notions are equivalent.\qed
\end{proposition}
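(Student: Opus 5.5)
The plan is to close the cycle of implications. The preceding proposition already gives $(1)\Rightarrow(2)\Rightarrow(3)\Rightarrow(4)$. So under $\DC$ it suffices to prove $(4)\Rightarrow(2)$, and under $\DC_{\omega_1}$ it suffices to prove in addition $(2)\Rightarrow(1)$. Here $\DC_{\omega_1}$ implies $\DC$, so the first part is available in the second case.

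For $(4)\Rightarrow(2)$ under $\DC$, suppose $|A|\nleq\aleph_0$. Then $A$ is infinite, since every finite set injects into $\omega$. It is standard that $\DC$, indeed already $\AC_\omega$, proves that every infinite set is Dedekind-infinite. Alternatively, one applies the fact from the preliminaries that $\DC_\omega$ gives $\omega\leq|A|\lor|A|\leq\omega$, and the second disjunct is excluded by assumption. Either way we get $\aleph_0\leq|A|$. Moreover $|A|\neq\aleph_0$, as otherwise $|A|\leq\aleph_0$. Hence $\aleph_0<|A|$, which is $(2)$-uncountability.

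For $(2)\Rightarrow(1)$ under $\DC_{\omega_1}$, we use the fact recalled in the preliminaries that $\DC_\kappa$ implies $\kappa\leq|X|\lor|X|\leq\kappa$ for every $X$, taking $\kappa=\omega_1$. Let $A$ be $(2)$-uncountable. If $\aleph_1\leq|A|$, we are done. Otherwise $|A|\leq\aleph_1$, so $A$ injects into $\omega_1$ and is therefore well-orderable, and its cardinal is an initial ordinal $\leq\omega_1$. It cannot be finite or $\omega$, since $\aleph_0<|A|$. Hence $|A|=\aleph_1$, and again $\aleph_1\leq|A|$.

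There is no real obstacle here. The only point needing care is unpacking the strict inequalities: $\aleph_0<|A|$ means $\aleph_0\leq|A|$ and $|A|\neq\aleph_0$, that is, $\aleph_0\leq|A|$ and no bijection with $\omega$. The remaining step is to note that a well-orderable set of cardinality at most $\aleph_1$ which is infinite and not countable has cardinality exactly $\aleph_1$.
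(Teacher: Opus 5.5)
Your proof is correct and is essentially the argument the paper leaves to the reader, since the proposition is stated without proof. The chain $(1)\Rightarrow(2)\Rightarrow(3)\Rightarrow(4)$ is closed using the dichotomy $\kappa\leq|X|\lor|X|\leq\kappa$ that follows from $\DC_\kappa$ (recalled in the preliminaries), with $\kappa=\omega$ for $(4)\Rightarrow(2)$ and $\kappa=\omega_1$ for reaching $(1)$; under $\DC_{\omega_1}$ the same dichotomy also gives $(4)\Rightarrow(1)$ directly, so you do not need the detour through $\DC$.
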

\subsection{Choiceless Knaster properties}
\begin{definition}
  Let $\PP$ be a notion of forcing. We say that $\PP$ has the $\K(i,j)$ property if whenever $A\subseteq\PP$ is $(i)$-uncountable, it has a $(j)$-uncountable subset of pairwise compatible conditions.
\end{definition}
It is easy to see that when $i$ grows large, more sets are considered, and the property is stronger, and as $j$ is smaller it is harder to find the witness, and so we are again strengthening the property. Taken at face value, the definition of Knaster would be $\K(4,4)$. But, if we assume $\DC$, this reduces to $\K(2,2)$.
\begin{theorem}\label{thm:knaster}
  It is consistent with $\ZF+\DC$ that there is a notion of forcing which is $\K(2,1)$ but not $\ccc_3$.
\end{theorem}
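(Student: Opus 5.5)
The plan is to use \cref{thm:transfer} with $\DC$ preserved: take $M=\omega_1$ in the empty language, $\sG$ its full symmetric group, and $\cI=[\omega_1]^{<\omega_1}$. This is the same setup as the final part of the proof that the notions of uncountability differ. Since $\cI$ is $\omega_1$-complete, the resulting symmetric extension $W$ satisfies $\DC$. It contains a symmetric copy $N$ of $\omega_1$ all of whose subsets are countable or co-countable, and $|N|$ is incomparable with $\aleph_1$. Inside $W$ I would define a forcing $\PP$ with two features. It should have a predense set indexed by $N$, which will witness failure of $\ccc_3$. And its compatibility structure should force every non-countable subset to collapse, via the stability given by \cref{thm:transfer}, into a well-orderable uncountable pairwise compatible piece.

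The concrete candidate for $\PP$ consists of the countable partial functions $p$ from $N$ to $2$ taking the value $1$ at most once, ordered by reverse inclusion. Countable subsets of $N$ are stable, so there are plenty of such conditions in $W$.

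\textbf{Not $\ccc_3$.} Let $D=\{\{\tup{x,1}\}\mid x\in N\}$. This set is predense: any condition $p$ either already has a $1$ at some $x$, and then it is compatible with $\{\tup{x,1}\}$, or it is identically $0$ on a countable domain. In the latter case it misses some $x\in N$, because $N$ is not countable, and it is again compatible with $\{\tup{x,1}\}$. Now take any countable $\{\{\tup{x_n,1}\}\mid n<\omega\}\subseteq D$. The condition $x_n\mapsto 0$ for all $n$ is a single countable function, and it is incompatible with every member of this subset. So $D$ has no countable predense subset.

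\textbf{$\K(2,1)$.} Let $A\subseteq\PP$ be $(2)$-uncountable in $W$. Write $A=\iota``B$ with $B$ stable via some countable $e\subseteq\omega_1$. Classify each $p\in A$ by the location of its unique $1$, or by ``no $1$''. Conditions sharing a class are pairwise compatible, since their union is again a function with at most one $1$. I would split into two cases.

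If some $p\in A$ has its $1$ at a point outside $\iota``e$, then stability should make $A$ closed under permutations fixing $e$. The issue is that moving the $1$ produces incompatible conditions. So in this case I would instead look at the class of functions that are $0$ off $\iota``e$, or use the fact that moving the $0$-part of $\dom p\setminus\iota``e$ while keeping the $1$ fixed yields $\aleph_1$ many conditions that are pairwise compatible. These can be indexed in $V[G]$ by an $\omega_1$-sequence, and I would argue that this sequence is symmetric by choosing the movements along a fixed well-ordered copy in $\cI$.

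Otherwise, all the $1$'s lie in the countable set $\iota``e\cup\{\text{none}\}$, so $A$ is partitioned into countably many classes. By $\DC$ (in particular $\AC_\omega$) one class is non-countable, and it is pairwise compatible.

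\textbf{Main obstacle.} The hardest step will be upgrading ``non-countable and pairwise compatible'' to ``contains a subset of size $\aleph_1$'', which is $(1)$-uncountability. This is exactly where $(2)$ and $(1)$ diverge in this model. What I would try first is to show that every non-countable $A$ must contain conditions whose domains are not contained in $\iota``e$. Indeed, the set of conditions with domain inside $\iota``e$ is essentially a set from $V$ coded by subsets of $e$, using $\power(M)^V=\power(M)^W$. I would then pass to a ground model satisfying $\ZFC+\mathsf{CH}$ so that this set is well-orderable of size at most $\aleph_1$, or can otherwise be handled. In the remaining case I would extract an $\omega_1$-indexed family of images of one condition under permutations fixing $e$ and its $1$-location. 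If this family cannot be made symmetric, I would tune $\PP$, for instance by restricting the allowed domains, so that this case works.
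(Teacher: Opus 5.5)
Your argument that $D$ has no countable predense subset is correct, but the forcing you chose is not $\K(2,1)$, and this is fatal. The set $D=\{\{\tup{x,1}\}\mid x\in N\}$ is itself an antichain: for $x\neq y$, a common extension of $\{\tup{x,1}\}$ and $\{\tup{y,1}\}$ would take the value $1$ twice. Since $x\mapsto\{\tup{x,1}\}$ is a bijection in $W$, $D$ is $(2)$-uncountable, while its pairwise compatible subsets have at most one element. So $\PP$ fails $\K(2,j)$ for every $j$, and it is not even $\ccc_2$. This is exactly the case you flagged (moving the $1$ produces incompatible conditions). It cannot be repaired inside your case analysis, because for $A=D$ there is no $0$-part to move. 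Restricting the allowed domains does not help either. The blocks $\{p\mid p(x)=1\}$ are pairwise incompatible for distinct $x$, and they still admit a transversal in $W$. For countably infinite domains, say, $x\mapsto\{\tup{x,1}\}\cup(\iota``\omega\times\{0\})$ for $x\notin\iota``\omega$ is one, and any such transversal is a $(2)$-uncountable antichain.

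The missing idea is this: under $\DC$, a $\K(2,j)$ forcing has no $(2)$-uncountable antichains and hence is $\ccc_2$. So the predense set without a countable predense subset must be spread over uncountably many mutually incompatible blocks, each uncountable and Knaster, with no symmetric way of selecting one condition per block.

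The paper's construction does exactly this. It starts from $\ZFC$ and takes $\QQ=\Add(\omega,\omega_1)\setminus\{\1\}$, which is Knaster and in which every condition has an uncountable $\aut(\QQ)$-orbit. It forms $\PP=\bigoplus_{\alpha<\omega_1}\QQ$ and applies \cref{thm:transfer} to $\PP$ itself, with $\sG$ the full support product of the groups $\aut(\QQ)$ and $\cI$ generated by countable unions of summands.

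If $A_*$ is $(2)$-uncountable, then $A$ is stable under $\fix(e)$ for a countable $e$. Either $A$ lies in the summands indexed by $e$, so some summand meets it uncountably, or it meets a summand outside $e$ and hence contains an uncountable orbit inside that summand. Knaster in $V$ then yields an uncountable pairwise compatible $B\subseteq A$ inside one summand. A single summand lies in $\cI$, so $B$ is fixed pointwise and $B_*$ is well-orderable, hence $(1)$-uncountable; this is what dissolves your main obstacle. Meanwhile $\PP\setminus\{\1\}$ is predense, invariant under all automorphisms, and has no countable predense subset, and this transfers to the copy.

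Using the forcing itself as the transferred structure also avoids a further gap in your plan. \cref{thm:transfer} only describes subsets of $N^k$, so writing a set of countable partial functions on $N$ as $\iota``B$ with $B$ stable is not supplied by the black box.
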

\begin{proof}
  We assume $\ZFC$ holds in the ground model. Let $\QQ^*$ be an uncountable notion of forcing which is Knaster, homogeneous, and for every $q\in\QQ^*\setminus\{\1_*\}$, the orbit of $q$ under $\aut(\QQ^*)$ is uncountable. For example, $\Add(\omega,\omega_1)$ is such forcing. Let $\QQ$ denote $\QQ^*\setminus\{\1_*\}$.

  We let $\PP$ be the lottery sum $\bigoplus_{\alpha<\omega_1}\QQ$. We let $\sG$ be the full support product $\prod_{\alpha<\omega_1}\aut(\QQ)$, and if $\pi=\tup{\pi_\alpha\mid\alpha<\omega_1}\in\sG$, then $\pi(\tup{\alpha,q}) = \tup{\alpha,\pi_\alpha q}$.

  Finally, let $\cI$ be the ideal generated by countable unions of summands. Namely, for $A\subseteq\PP$, $A\in\cI$ if and only if there is some $e\in[\omega_1]^{<\omega_1}$ such that $A\subseteq\bigoplus_{\alpha\in e}\QQ$. We let $\fix(e)=\{\pi\in\sG\mid\pi\restriction\bigoplus_{\alpha\in e}\QQ = \id\}$.

  Let $W$ be the symmetric extension satisfying $\ZF+\DC$ guaranteed by \cref{thm:transfer} and let $\PP_*\in W$ be the symmetric copy of $\PP$. As we may assume that no subsets of $\PP$ were added, we may denote by $A_*\subseteq\PP_*$ the subset corresponding to $A\subseteq\PP$ in $V$.

  Let $A_*\subseteq\PP_*$ be a $(2)$-uncountable set. In $V$, $A$ is stable under $\fix(e)$ for some countable $e$. So, either $A\subseteq\bigoplus_{\alpha\in e}\QQ$, in which case there is some $\alpha$ such that $A$ meets the $\alpha$th summand on an uncountable set; or else there is some $\alpha\notin e$ such that $A$ meets the $\alpha$th summand, in which case $A$ contains the uncountable orbit of a condition. In either case, $A$ contains an uncountable subset of a single summand. As $\QQ$ is Knaster, there is an uncountable $B\subseteq A$, which is contained in a single summand, and every two conditions in $B$ are compatible. Therefore, $B_*$ is a $(2)$-uncountable subset of $A_*$. However, as $B$ is fixed pointwise, $B_*$ can be well-ordered, and therefore it is $(1)$-uncountable as wanted.

  To see that $\ccc_3$ fails, $\PP\setminus\{\1_\PP\}$ is certainly predense and stable under all automorphisms, however it does not have any countable predense set. Therefore, its symmetric copy does not have such countable predense set either.
\end{proof}
As $\DC_{\omega_1}$ implies that $\ccc_2$ forcings are $\ccc_3$, it follows that in the presence of $\DC_{\omega_1}$, Knaster does imply $\ccc_3$.
\section{Open questions}
This research was motivated by trying to understand Martin's Axiom in $\ZF$. Each definition of ccc gives rise to a different version of Martin's Axiom. But in order to force Martin's Axiom, one would naively want to try and mimic the $\ZFC$ proof, in which an iteration is defined. This motivated us to try and make sense of Schlicht's remark, as well as to understand the connection between the Knaster property and ccc in $\ZF$.

As Martin's Axiom has not been properly investigated in $\ZF$, there are many interesting questions that one can try and solve. We present a few that we think are particularly important for the general understanding of forcing axioms in $\ZF$.

\begin{question}Are any of the following a consequence of Martin's Axiom for $\ccc_3$ forcings or Martin's Axiom for $\ccc_2$ forcings?
  \begin{enumerate}
  \item $\DC$? $\DC_{\omega_1}$?
  \item $\ccc_2$ is equivalent to $\ccc_3$?
  \item $\K(i,j)$ implies $\ccc_3$ for some $i,j$?
  \item $\ccc_3$ implies $\K(i,j)$ for some $i,j$?
  \end{enumerate}
\end{question}
\begin{question}
  Assuming $\ZF+\DC$, is there a $\ccc_3$ forcing (e.g., a finite support iteration) which forces Martin's Axiom for $\ccc_3$ without necessarily forcing the Axiom of Choice?
\end{question}
\begin{question}
  Is the Axiom of Choice needed at all to prove that the two-step iteration of $\ccc_3$ forcings is $\ccc_3$?
\end{question}
\begin{question}
  What is the exact fragment of the Axiom of Choice that is equivalent to the iterability of $\ccc_3$ forcings?
\end{question}

Ikegami and Schlicht defined the notion of a ``uniformly narrow'' forcing, which essentially provides a function that enumerates all the well-orderable antichains in the Boolean completion. Using a similar concept of a ``uniform iteration'' they show that the uniform iteration of uniformly narrow forcing is again uniformly narrow.

\begin{question}
  Can we develop the theory of ``uniformly $\ccc_3$'' forcings (e.g., by having a function which provides enumerated countable predense subsets) and their iteration without the Axiom of Choice?
\end{question}

We finish with a question about the theory of proper forcing in $\ZF+\DC$ which is somewhat tangential to the main focus of this paper. Say that $p$ is a \emph{weakly $M$-generic condition} if  whenever $\dot x\in M$ is such that $p\forces\dot x\in\check V$, then $p\forces\dot x\in\check M$. Assuming $\ZFC$, weakly generic conditions are generic, and indeed we can weaken them even further to require that $\dot x$ is a name for an ordinal. This characterisation provides an easy and straightforward proof for \cref{lemma:succ}. However, this lemma relies heavily on maximal antichains and well-orders.

\begin{question}
  Does $\ZF+\DC$ prove that a weakly $M$-generic condition is $M$-generic? Can we restrict this even further to names for ordinals?
\end{question}
\subsection*{Acknowledgements} The authors would like to thank Vincenzo Mantova for his suggestions on the introduction of this paper.
\providecommand{\bysame}{\leavevmode\hbox to3em{\hrulefill}\thinspace}
\providecommand{\MR}{\relax\ifhmode\unskip\space\fi MR }
\providecommand{\MRhref}[2]{%
  \href{http://www.ams.org/mathscinet-getitem?mr=#1}{#2}
}
\providecommand{\href}[2]{#2}

\end{document}